\documentclass[11pt,reqno]{amsart}

\usepackage{amssymb}
\usepackage{amsmath}
\usepackage{amsfonts}
\usepackage{graphicx}
\usepackage{tikz}
\usepackage[width=150mm,height=220mm,
centering,
]{geometry}
\usepackage{cite}
\usepackage{algorithm,algorithmic}
\allowdisplaybreaks
\def\supp{\mathrm{supp}\,}

\begin{document}
\newtheorem{theorem}{Theorem}[section]
\newtheorem{lemma}[theorem]{Lemma}
\newtheorem{definition}[theorem]{Definition}
\newtheorem{example}[theorem]{Example}
\newtheorem{corollary}[theorem]{Corollary}
\newtheorem{remark}[theorem]{Remark}
\newtheorem{proposition}[theorem]{Proposition}
\numberwithin{equation}{section}
\def\arg{\mathrm{arg}}
\def\rmd{\mathrm d}

\bibliographystyle{abbrv}

\title
{STFT Phase Retrieval with Two Window Functions}
\author[T. Chen]{Ting Chen}
\address{School of Mathematical Sciences and LPMC,
Nankai University,
Tianjin,
China}
\email{t.chen@nankai.edu.cn}

\author[H. Lu]{Hanwen Lu}
\address{School of Mathematical Sciences and LPMC,
Nankai University,
Tianjin,
China}
\email{2013537@mail.nankai.edu.cn}

\author[W. Sun]{Wenchang Sun}
\address{School of Mathematical Sciences and LPMC,
Nankai University,
Tianjin,
China}
\email{sunwch@nankai.edu.cn}

\author[Y. Zhao]{Yutong Zhao}
\address{School of Mathematical Sciences and LPMC,
Nankai University,
Tianjin,
China}
\email{zhaoyt@mail.nankai.edu.cn}

\date{}

\keywords{Phase retrieval;
Short-time Fourier transform;
periodic signals;
locally integrable signals.}

\thanks{Corresponding author: Wenchang Sun.}

\thanks{This work was supported by the National Natural Science Foundation of China (grant No. 12271267 and 12571104) and the Fundamental Research Funds for the Central Universities.}

\begin{abstract}
In this paper, we consider the uniqueness of STFT phase retrieval with two window functions.
We show that a complex-valued locally integrable
nonseparable signal is uniquely determined
up to a global phase by phaseless samples
of its short time Fourier transforms
with respect to two well-chosen window
functions
over countable parallel lines
or certain lattices.
Moreover, we give the optimal sampling interval
for STFT phase retrieval with compactly supported
window functions.
For periodic locally integrable signals,
we obtain a uniqueness result for STFT phase
retrieval with sampled values
over two parallel lines
whose distance is an irrational multiple of the period.
And for quasi-periodic signals, we obtain
a similar result.
\end{abstract}

\maketitle

\section{Introduction}

In many fields of modern sciences, due to the fact that detectors can only record the modulus of a signal and lose phase information, how to recover a signal from its absolute values of linear measurements has attracted widespread attention. Note that it is impossible to distinguish between $f$ and $\lambda f$ based only on the phaseless  measurements of $f$, where
\[
\lambda \in \mathbb{T}:=\{z \in \mathbb{C}:|z|=1\}.
\]
Therefore, the phase retrieval problem can be accurately described as follows:
can a signal $f$ be recovered from its absolute values
of linear measurements up to a global phase. This problem is widely used in a large number of applications, such as signal processing \cite{2006signal,2014signal,2020signal,2019signal,
2014signal2,
Sign2021},
diffraction imaging \cite{2007Diffractive,
2016Alternating,
2008PDiffractive,
2002X-ray}, optics \cite{1978object,1988optics,2015Optical} and quantum information theory \cite{2013Quantum,2015quantum}.

Phase retrieval is studied
in various aspects with different linear measurements, which include phaseless sampling \cite{2019signal,2011unsigned,2024entire},
Fourier transforms \cite{1956fourier,1957fourier,1987fourier,
2020fourier,
2014pauli}, wavelet transforms \cite{2023wavelet,2015wavelet,2017wavelet},
and short-time Fourier transforms
\cite{2021stft,2022gabor,2018stft,
2022stft,2025stft,2025stft1,
2019gabor,
2021stft1,
2024stlct,
2004entire}.

Recall that for a signal $f \in L^{2}(\mathbb{R})$, its
short time Fourier transform (STFT) with respect to
a window function $\phi\in L^{2}(\mathbb{R})$ is defined by
\begin{align}
\mathcal{V}_{\phi} f(t, \omega):=\int_{\mathbb{R}} f(x) \overline{\phi(x-t)} e^{-2 \pi i x \omega} \rmd x, \quad (t, \omega) \in \mathbb{R}^2. \label{eq:stft}
\end{align}
If a signal $f$ is
uniquely determined by its phaseless STFT measurements $|V_{\phi}f|$
up to a global phase,
then we say that the window function $\phi$ does
STFT phase retrieval for the signal $f$.
If $\phi$ does STFT phase retrieval for
all signals in a function space,
then we say that $\phi$ does STFT phase retrieval for
the function space.

The ambiguity function
\cite{Foundations2001} is a useful tool in the study
of STFT phase retrieval.
It was shown in \cite{2021stft,2020fourier,2019gabor}
that when $f,
\phi\in L^2(\mathbb R)$, then
\[
  \mathcal F (|\mathcal V_{\phi}f|^2)(\eta,\xi)
    = c_{\phi} \mathcal V_ff(-\xi,\eta)
    \overline{\mathcal V_{\phi}\phi (-\xi,\eta)}.
\]
When the ambiguity function $\mathcal A\phi(t,\omega):= e^{\pi i t\omega}\mathcal V_{\phi}\phi(t,\omega)$ of  window $\phi$ is nowhere
vanishing,
$\phi$ does STFT phase retrieval for
$L^2(\mathbb R)$
~\cite{2021stft,2019gabor}.
By analysing
ambiguity functions $\mathcal A\phi$
and
$\mathcal A f $,
Bartusel \cite{Bartusel2023}
proved that a large class of window functions
do phase retrieval for connected $L^2$ signals.

In this paper, we focus on STFT phase retrieval with compactly supported integrable window functions $\phi$.
That is,
there is a positive number $B$ such that $\supp
\phi \subset [-B,B]$ and $\phi\in L^1[-B,B]$.

For the STFT of a signal $f$ with respect to a
compactly supported window function
to make sense, it suffices for $f$ to
be in $L_{loc}^1(\mathbb R)$
(see Lemma~\ref{Lm:L0}),
that is, $f$ is integrable on
any interval of finite length.
Although compactly supported window functions
belong to the window class studied
in \cite{Bartusel2023},
we have to study the STFT phase retrieval
with new method since the ambiguity function
$\mathcal A f$
does not make sense when $f\in L_{loc}^{1}\setminus L^1$. In fact, our analysis is based on a result
of local STFT phase retrieval (see Lemma~\ref{lem1}).

On the other hand,
since the window function is compactly supported,
it does not do STFT phase retrieval
for separable functions.
To be specific, we introduce the following definition.

\begin{definition}
We call a function $L$-separable if there exists an interval of length $L$ such that $f$ vanishes on this interval.

If there is no such an interval, we call $f$
$L$-nonseparable.
\end{definition}

For STFT phase retrieval with sampled values,
Alaifari
and Wellershoff \cite{2022gabor}
showed that the Gaussian window function
with any lattice does not do STFT phase retrieval.
And Grohs and Liehr~\cite{GrohsLiehr2023}
proved the same conclusion for arbitrary
window functions.
That is,
for any $g\in L^2$ and $a,b>0$,
there is some $f\in L^2(\mathbb R)$ which
is not uniquely determined up to
a global phase by $\{|\mathcal V_{g}f(ma,nb)|\}_{m,n\in\mathbb Z}$.
And in \cite{2025stft1},
Grohs, Liehr and Rathmair
considered the STFT phase retrieval problem with four window functions and established the uniqueness theorem on a lattice for square integrable
functions, where each window function is a linear combination of two given Hermite functions.

In this paper, we study
STFT phase retrieval
with two
compactly supported
window functions and
sampled values on certain lattices,
for which we give the optimal sampling interval.
Moreover, our results are valid for  signals
which are locally integrable.
As a special case, we obtain the following.

\begin{theorem}\label{thm:main}
Suppose that $B$ is a positive contant,
$\phi \in  L^{p'}(\mathbb{R})$ for some $1\le p\le \infty$
with $\operatorname{supp}( \phi) \subset
[-B, B]$, $\overline{\phi(-x)}=\phi(x)$ and
\begin{align*}
\phi(x) \ne 0,\quad \mbox{ a.e. } x\in [-B, B].
\end{align*}
Let $\psi(x)=\phi(x) (e^{ 2\pi ixb}-1) $, where $0<b\le 1/(2B)$.

Then for any $0<a\le B$ and $f\in L^p(\mathbb R)$ which is $(2B-a)$-nonseparable, $f$ is determined up to a global phase by the measurements
\begin{align} \label{eq:ed2}
\{|\mathcal{V}_{\phi} f(ma,n/(4B))|,~|\mathcal{V}_{\psi} f(ma,n/(4B))|:n,m \in \mathbb{Z}\}.
\end{align}

Moreover, the conclusion fails if $a>B$ or $f$ is $(2B-a)$-separable.
\end{theorem}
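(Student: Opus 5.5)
The plan is to reduce the sampled problem to the local phase retrieval result Lemma~\ref{lem1} on each window. Then glue the local reconstructions along the chain $t=ma$ using the overlaps of consecutive windows.

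\emph{Step 1: from samples to full frequency information.} Fix $t=ma$ and set $g_t(x)=f(x)\overline{\phi(x-t)}$. By H\"older's inequality (cf.\ Lemma~\ref{Lm:L0}), $g_t\in L^1$, and $g_t$ is supported in $[t-B,t+B]$. Then $|\mathcal V_\phi f(t,\omega)|^2=|\widehat{g_t}(\omega)|^2$ is the Fourier transform of the autocorrelation $g_t * \overline{g_t(-\cdot)}$. This autocorrelation is supported in $[-2B,2B]$, an interval of length $4B$. Hence $|\widehat{g_t}|^2$ is the restriction to $\mathbb R$ of an entire function of exponential type that is determined by its values on $\frac{1}{4B}\mathbb Z$ (Fourier series on an interval of length $4B$). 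So the samples at $n/(4B)$ give $|\mathcal V_\phi f(t,\omega)|$ for all $\omega\in\mathbb R$. The same argument applies to $\psi$, whose support also lies in $[-B,B]$. Next, $\overline{\psi(x-t)}=\overline{\phi(x-t)}(e^{-2\pi i (x-t)b}-1)$, so
\[
\mathcal V_\psi f(t,\omega)=e^{2\pi i tb}\,\widehat{g_t}(\omega+b)-\widehat{g_t}(\omega).
\]
Thus for every $\omega$ we know $|\widehat{g_t}(\omega)|$ and $|e^{2\pi i tb}\widehat{g_t}(\omega+b)-\widehat{g_t}(\omega)|$, and hence $\operatorname{Re}\big(e^{2\pi i tb}\widehat{g_t}(\omega+b)\overline{\widehat{g_t}(\omega)}\big)$.

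\emph{Step 2: local recovery.} These are exactly the data in Lemma~\ref{lem1} for a function supported on an interval of length $2B$ with shift $b\le 1/(2B)$. I would invoke that lemma to conclude that $g_t$ is determined up to a unimodular constant $c_m$. If the lemma only yields $g_t$ up to the twin ambiguity $\overline{g_t(2t-\cdot)}$, I would use the symmetry $\overline{\phi(-x)}=\phi(x)$ together with the overlap step below to exclude the twin. This is the step I expect to be the main obstacle, and it is presumably where $a\le B$ enters: the overlap of consecutive windows then has length at least $B$, i.e.\ at least half a window. Since $\phi\neq 0$ a.e.\ on $[-B,B]$, dividing by $\overline{\phi(x-t)}$ recovers $c_m f$ a.e.\ on $I_m=[ma-B,ma+B]$.

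\emph{Step 3: gluing.} The intersection $I_m\cap I_{m+1}=[(m+1)a-B,\,ma+B]$ has length $2B-a>0$. Since $f$ is $(2B-a)$-nonseparable, $f$ is not a.e.\ zero on this intersection. Comparing the two reconstructions there therefore determines $c_{m+1}/c_m$. Inducting in both directions over $m\in\mathbb Z$ fixes all the $c_m$ relative to $c_0$. Because $\bigcup_m I_m=\mathbb R$, $f$ is determined up to one global phase.

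\emph{Sharpness.} Suppose $f$ is $(2B-a)$-separable. Translating, we may take the vanishing interval to contain $[(m+1)a-B,\,ma+B]$ for some $m$. Write $f=f_1+f_2$, with $f_1$ supported to the left of this interval and $f_2$ to the right. Every window $I_k$ then meets at most one of $\operatorname{supp} f_1$ and $\operatorname{supp} f_2$. Hence $f_1+f_2$ and $f_1-f_2$ have identical measurements, yet they do not differ by a global phase when both pieces are nonzero. For $a>B$, the gaps between window centres leave the local twin ambiguity unresolved. I would construct an $f$, supported inside a single window and nonseparable near it, for which $f$ and a suitably modified twin agree in modulus on all windows that touch its support.
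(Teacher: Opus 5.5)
There is a genuine gap, and it sits in Step 2, at the twin ambiguity you set aside. That ambiguity is the heart of the proof.

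Your Step 1 is correct; it is the first half of the proof of Lemma~\ref{Lm:L2}. Your gluing in Step 3 is correct when every window returns an untwisted relation $f=\lambda_m g$ on $I_m=[ma-B,ma+B]$. Your separable counterexample is essentially the paper's Example~\ref{ex:ex1}.

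Lemma~\ref{lem1} only gives, for each $m$, either $f=\lambda_m g$ or $f=\lambda_m\overline{g(2ma-\cdot)}$ on $I_m$. Your proposed remedy cannot remove the second alternative:
\begin{itemize}
\item The symmetry $\phi^*=\phi$ does not help. It is exactly what makes the twin produce identical data; see the direction (i)$\Rightarrow$(ii) in Lemma~\ref{lem1}.
\item Overlaps alone do not help either. If every window returns a twin, the overlaps give no contradiction.
\end{itemize}
Concretely, the pair $f=(2+\sin(\pi x/a))e^{\pi i x/(6a)}$, $g=(2-\sin(\pi x/a))e^{\pi i x/(6a)}$ from Remark~\ref{rmk} satisfies $f=e^{\pi i m/3}\,\overline{g(2ma-\cdot)}$ for every $m$. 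It is nowhere vanishing and has identical measurements, yet $f\not\sim g$. Your argument uses $f\in L^p$ only through H\"older's inequality, so it cannot tell this pair apart.

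The paper closes the gap with a two-case argument.
\begin{itemize}
\item \emph{Some window is untwisted}, say $m=0$. Induct outward. If $I_{k+1}$ is untwisted, nonseparability on the overlap $[(k+1)a-B,ka+B]$ gives $\lambda_{k+1}=\lambda_0$. If $I_{k+1}$ is twisted, reflect the known identity $f=\lambda_0 g$ on that overlap about $(k+1)a$. Using both $f=\lambda_{k+1}\overline{g(2(k+1)a-\cdot)}$ and $g=\lambda_{k+1}\overline{f(2(k+1)a-\cdot)}$, this gives $f=\lambda_0 g$ on $[(k+2)a-B,(k+1)a+B]$. That interval covers the new piece $[ka+B,(k+1)a+B]$ exactly when $a\le B$. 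This is the precise form of your half-window heuristic.
\item \emph{Every window is twisted.} Composing the relations at $(m-1)a$ and $ma$ gives $f(x)=\lambda_{m-1}\overline{\lambda_m}f(x+2a)$ on $[(m-1)a-B,(m-2)a+B]$. These intervals cover $\mathbb R$, again because $a\le B$. So $|f|$ is $2a$-periodic, and $f\in L^p$ forces $f=0$, contradicting nonseparability.
\end{itemize}
The last step needs $p<\infty$. For $p=\infty$, the bounded pair above shows the statement itself fails, so no argument can cover that case.

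Your sharpness sketch for $a>B$ also needs repair: an $f$ supported in a single window is separable. The paper's Example~\ref{ex:ex3} instead takes a nonseparable $f$ with $f=-f^*$ on $\{|x|>a-B\}$, and sets $g=-f^*$. Then:
\begin{itemize}
\item $f=g$ on every $I_m$ with $m\neq 0$, since these windows avoid the gap $(-(a-B),a-B)$, which is nonempty exactly when $a>B$;
\item $g$ is the twin of $f$ on $I_0$;
\item $f\not\sim g$ whenever $f\neq -f^*$ on the gap.
\end{itemize}
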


The paper is organized as follows. In Section 2, we recall some fundamental results
on Fourier transforms and properties of
holomorphic functions
which are used in the proofs.
In Section 3, we study STFT phase retrieval
from semi-discrete samples.
We establish a uniqueness result
for STFT phase retrieval of locally integrable
functions.
And in Section 4, we study STFT phase retrieval
from discrete samples.
We consider three types of
signals which include periodic,
quasi-periodic, and locally integrable signals.
For all types of signals we show
that STFT phase retrieval holds for nonseparable signals.
Specifically, for periodic
and quasi-periodic  locally integrable signals,
we obtain uniqueness results for STFT phase retrieval with sampled values
over two parallel lines
whose distance is an irrational multiple of the period.
And for general locally integrable signals,
we obtain uniqueness results with
sampled values over countable lines.

\section{Preliminaries}

In this section, we collect some preliminary results to be used in the proofs of main results.

\subsection{Some notations}
For a signal $f \in L^{1}(\mathbb{R})$, its Fourier transform is defined as
\begin{align*}
\mathcal{F} f(\omega)=\hat f(\omega)
  :=\int_{\mathbb{R}} f(x) e^{-2 \pi i x \omega} \rmd x, \quad \omega \in \mathbb{R}.
\end{align*}
The Fourier transform extends to a unitary operator on $L^{2}(\mathbb{R})$ by a density argument.

For $\tau, \nu\in \mathbb{R}$, the translation
operator $T_{\tau}$
and modulation operator $M_{\nu}$ are defined respectively by
\begin{align*}
T_{\tau} f(x)= f(x-\tau).
\end{align*}
and
\begin{align*}
M_{\nu} f(x)=e^{2 \pi i \nu x} f(x).
\end{align*}

The conjugate reflection of a function $f$ is defined by
\[
   f^*(x) = \overline{f(-x)}.
\]

For two functions $f$ and $g$,
$f\sim g$ stands for $f = \lambda g$ for some
constant $\lambda\in\mathbb T$.

\subsection{Completeness of complex exponentials}
\label{sec:sub2}
Let $f\in L^1[-B,B]$ for some $B>0$. The Fourier transform of $f$ is
\[
  \hat f(\omega) = \int_{-B}^B f(t) e^{-2\pi i t\omega} \rmd t,\quad
  \omega\in\mathbb R.
\]
$\hat f$ extends to a function defined on the complex plane $\mathbb C$
with
\[
  \hat f(z) = \int_{-B}^B f(t) e^{-2\pi i tz} \rmd t,\quad
  z\in\mathbb C.
\]
Since $f\in L^1[-B,B]$,
it is easy to see that $\hat f$ is continuous on $\mathbb C$. Moreover,
since $ e^{-2\pi i tz}$ is holomorphic as a function of $z$,
by Fubini's theorem and Cauchy's theorem, the integral of
$f$ along any closed path is $0$. Hence $\hat f$ is holomorphic on $\mathbb C$, thanks to Morera's theorem. Furthermore,
\[
  |\hat f(z)|\le C e^{2\pi B|z|}.
\]
Hence $\hat f$ extends to an exponential type entire function with type $2\pi B$.

Observe that
\begin{align}
  |\hat f(\omega)|^2
  &=
  \int_{-B}^B\int_{-B}^B f(t) \overline{f(s)} e^{-2\pi i (t-s)\omega} \rmd t
   \, \rmd s \nonumber \\
  &=
  \int_{-2B}^{2B}\Big(\int_{[-B-t,B-t]\cap[-B,B]} f(t+s) \overline{f(s)} \rmd s \Big) e^{-2\pi i t\omega} \rmd t
   .  \label{eq:ea3}
\end{align}
Similar arguments show that
$|\hat f|^2$ extends to an exponential type entire function with type $4\pi B$.


It is well known that a function integrable on $[-B,B]$
is uniquely determined by its Fourier coefficients.
For a proof, see
\cite[Corollary 1.11]{Duoandikoetxea2001}.

\begin{proposition}\label{prop:p3}
If $f\in L^1[-B,B]$ and
\[
  \int_{-B}^B f(x) e^{- \pi i nx/B  } \rmd x = 0,\quad \forall n\in\mathbb Z,
\]
then $f=0$, a.e.
\end{proposition}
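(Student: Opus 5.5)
The plan is to use the vanishing Fourier coefficients to show that $f$ is orthogonal to every continuous $2B$-periodic function. From there I pass to indicator functions of intervals, and finally use Lebesgue's differentiation theorem.

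\emph{Step 1 (orthogonality to trigonometric polynomials).} By linearity, the hypothesis gives $\int_{-B}^B f(x)\,\overline{P(x)}\,\rmd x=0$ for every trigonometric polynomial $P(x)=\sum_{|n|\le N} c_n e^{\pi i n x/B}$ of period $2B$.

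\emph{Step 2 (density).} Let $g$ be continuous on $[-B,B]$ with $g(-B)=g(B)$, so that $g$ extends to a continuous $2B$-periodic function on $\mathbb R$. By Fej\'er's theorem, the Ces\`aro means $\sigma_N g$ of its Fourier series converge to $g$ uniformly on $[-B,B]$. They arise from convolution with the Fej\'er kernel, which is a nonnegative approximate identity, so uniform continuity of $g$ gives the uniform convergence directly. Since $f\in L^1[-B,B]$, we get
\[
\Big|\int_{-B}^B f\,(\overline{g}-\overline{\sigma_N g})\Big|\le \|f\|_{L^1}\,\|g-\sigma_N g\|_\infty\to 0 .
\]
Each $\sigma_N g$ is a trigonometric polynomial, so Step 1 yields $\int_{-B}^B f\,\overline g=0$ for all such $g$.

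\emph{Step 3 (intervals).} Fix $-B<\alpha<\beta<B$. For each $k$, take $g_k$ to be the piecewise linear function that equals $1$ on $[\alpha+1/k,\beta-1/k]$, vanishes outside $(\alpha,\beta)$, and is linear in between. Then $g_k$ is continuous, satisfies $g_k(\pm B)=0$, has $0\le g_k\le 1$, and $g_k\to\chi_{(\alpha,\beta)}$ pointwise. By dominated convergence, with dominating function $|f|$, we obtain $\int_\alpha^\beta f=0$. Letting $\alpha\to -B$, again by dominated convergence, the primitive $F(x)=\int_{-B}^x f(t)\,\rmd t$ vanishes identically on $(-B,B)$.

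\emph{Step 4 (conclusion).} By Lebesgue's differentiation theorem, $F'(x)=f(x)$ for a.e. $x\in[-B,B]$. Since $F\equiv 0$, it follows that $f=0$ a.e.

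The only step with real content is Step 2, the uniform density of trigonometric polynomials among continuous periodic functions. I would cite Fej\'er's theorem, or alternatively Stone--Weierstrass applied on the circle $\mathbb R/2B\mathbb Z$. Two points need care there. First, the matching condition $g(-B)=g(B)$ is required; this is why Step 3 uses test functions vanishing at the endpoints rather than approximating $\chi_{[-B,x]}$ directly. Second, only $L^1$ integrability of $f$ is used, so no $L^2$/Parseval argument is available, and the duality pairing against continuous functions replaces it.
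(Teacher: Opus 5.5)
Your proof is correct, but it follows a different route from the one behind the paper's statement. The paper gives no argument of its own; it cites the uniqueness theorem for Fourier coefficients of $L^1$ functions (Duoandikoetxea, Corollary 1.11).

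The standard proof of that result applies Fej\'er summability to $f$ itself. By hypothesis every Fej\'er mean $\sigma_N f=F_N*f$ vanishes identically. Since the Fej\'er kernel is an approximate identity, $F_N*f\to f$ in $L^1[-B,B]$, so $f=0$ a.e. at once.

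You instead apply Fej\'er's theorem only to continuous $2B$-periodic test functions, in the uniform norm. You then recover $f$ from the functional $g\mapsto\int_{-B}^B f\,\overline g$ using dominated convergence and Lebesgue's differentiation theorem. The trade-off is as follows:
\begin{itemize}
\item The direct route is a one-liner, but it needs $L^1$-convergence of Fej\'er means, which rests on continuity of translation in $L^1$.
\item Your route needs only the uniform statement for continuous periodic functions plus elementary measure theory. It is the same duality pattern the paper uses in the second part of Proposition~\ref{prop:p2}.
\end{itemize}

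Your remark about the endpoint condition is exactly the point that matters in that pattern. The system $\{e^{\pi i n x/B}\}_{n\in\mathbb Z}$ is not complete in $C[-B,B]$; it is dense only in the subspace of $g$ with $g(-B)=g(B)$. So the one-line step ``the system is complete in $C$, hence the functional vanishes'' from Proposition~\ref{prop:p2} would not transfer verbatim. Your Step 3, using test functions that vanish at $\pm B$, is what makes the duality argument work in this setting.
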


When the sampling interval decreases slightly, only ``half'' Fourier coefficients
are needed to determine an integrable function.
Specifically,
the system
\[
    \{e^{\pi in x/B}\}_{n\ge 1}
\]
is complete in $C[-A,A]$ if $0<A<B$, which is a consequence
of Carleman's theorem~\cite[Page 116]{Young2001}.

Moreover, the restriction on sampling points can be further relaxed.

\begin{proposition}
\label{prop:p2}
Let $A>0$ and $\{\omega_n\}_{n\ge 1}$ be a sequence of positive
numbers such that
\[
  \limsup_{n\to\infty}  \frac{n}{\omega_n} > {2A}.
\]
Then the system $\{e^{2\pi i\omega_n x}\}_{n\ge 1}$ is complete
in $C[-A,A]$.

Consequently, if $f\in L^1[-A,A]$ and
\[
  \int_{-A}^A f(x) e^{2\pi i \omega_n x} \rmd x =0, \quad \forall n\ge 1,
\]
then $f=0$, a.e.
\end{proposition}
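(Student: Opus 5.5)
Both assertions follow from Carleman's theorem, quoted above, via a duality argument in $C[-A,A]$ and then Hahn--Banach plus the Riesz representation theorem. Let $\mathcal S$ be the closed linear span of $\{e^{2\pi i\omega_n x}\}_{n\ge1}$. Completeness means that every continuous linear functional on $C[-A,A]$ vanishing on $\mathcal S$ is zero. Such a functional is given by a finite complex Borel measure $\mu$ on $[-A,A]$, and vanishing on $\mathcal S$ says
\[
  F(z):=\int_{[-A,A]} e^{2\pi i zx}\,\rmd\mu(x)
\]
satisfies $F(\omega_n)=0$ for all $n\ge1$. So the task is to show that such an $F$ is identically zero. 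Fourier uniqueness for measures then gives $\mu=0$.

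As in \S\ref{sec:sub2}, $F$ is entire of exponential type at most $2\pi A$, and it is bounded on $\mathbb R$ by $\|\mu\|$. I will work in the upper half-plane. The substitution $G(z)=e^{2\pi i Az}F(z)=\int e^{2\pi i z(x+A)}\rmd\mu(x)$ is bounded by $\|\mu\|$ in $\{\Im z\ge 0\}$, because $x+A\ge0$. By Carleman's (or Nevanlinna's) theorem for bounded analytic functions in the upper half-plane, the positive real zeros $\omega_n$ of a nonzero $G$ satisfy the Blaschke-type condition $\sum_n 1/\omega_n<\infty$. That condition alone does not contradict the hypothesis, so I must use the full exponential-type information instead.

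The sharper tool is the density statement in Carleman's formula. If $F\not\equiv0$ is of exponential type and bounded on $\mathbb R$, with indicator diagram in $[-2\pi i A,2\pi iA]$, then the counting function $n_+(r)$ of its zeros in the right half-plane satisfies
\[
  \limsup_{r\to\infty} \frac{n_+(r)}{r}\le 2A.
\]
Indeed, the Cartwright class has zeros in each half-plane with density $(h(\pi/2)+h(-\pi/2))/(2\pi)=2A$ in total, split evenly between $\Re z>0$ and $\Re z<0$ (Levinson--Cartwright). The hypothesis $\limsup n/\omega_n>2A$ gives $n_+(\omega_n)\ge n$ along a subsequence with $n/\omega_n>2A+\epsilon$. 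This contradicts the density bound, so $F\equiv0$.

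The main obstacle is invoking the precise density theorem, and being careful that the bound concerns $\limsup n(r)/r$ rather than $\lim$. Once the result is quoted from \cite[Page 116]{Young2001} or Levinson's theorem, the rest is bookkeeping. If the $\limsup$ version is not directly quotable, I would use the Carleman/Jensen formula on the half-disk for $G$ with the averaged quantity $\int_1^r n_+(t)\,t^{-2}\,\rmd t\le 2A\log r+O(1)$. This averaged quantity is what Carleman's formula controls. A subsequence with $n/\omega_n>2A+\epsilon$ alone does not force it above $(2A+\epsilon')\log r$, so on that route I would use the full strength of the Levinson density theorem.

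For the consequence, suppose $f\in L^1[-A,A]$ has all these integrals zero. Then $\rmd\mu=f\,\rmd x$ annihilates every $e^{2\pi i\omega_n x}$, hence all of $C[-A,A]$ by completeness. So $\int fg=0$ for all continuous $g$, and therefore $f=0$ a.e., for example by Proposition~\ref{prop:p3} applied on a slightly larger interval after extending $f$ by zero.
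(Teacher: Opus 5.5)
Your argument is correct, and your treatment of the consequence ($f\,\rmd x$ annihilates the system, hence all of $C[-A,A]$, hence $f=0$ a.e.) is exactly the paper's. For the completeness assertion the paper gives no argument; it cites Levinson's theorem from Young's book. You unpack that citation instead. Hahn--Banach and Riesz reduce completeness to showing that $F(z)=\int e^{2\pi i zx}\,\rmd\mu(x)$ cannot vanish at every $\omega_n$ unless $\mu=0$. This $F$ lies in the Cartwright class with $h(\pm\pi/2)\le 2\pi A$. The Cartwright--Levinson theorem then finishes: the zeros of a nonzero Cartwright-class function in $\Re z>0$ have a limiting density $(h(\pi/2)+h(-\pi/2))/(2\pi)\le 2A$. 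Your route rests on a theorem just as deep as the one the paper quotes. What it buys is an explanation of the statement. The constant $2A$ is the maximal width $4\pi A$ of the indicator diagram divided by $2\pi$. A $\limsup$ hypothesis suffices because the zero-counting function has an actual limit, not just the logarithmically averaged bound that Carleman's formula gives. You are right that the averaged bound alone would not be enough.

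Several statements along the way should be fixed, though none affects the final argument.

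First, the density $(h(\pi/2)+h(-\pi/2))/(2\pi)$ holds in each half-plane $\Re z>0$ and $\Re z<0$ separately, and it is $\le 2A$, not $=2A$. Reading it as $2A$ ``in total, split evenly'' would give only $A$ per half-plane, which is false: $\sin(\pi z)$ with $A=1/2$ has density $1=2A$ in $\Re z>0$. Your displayed bound $\limsup n_+(r)/r\le 2A$ is the correct one.

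Second, the Blaschke-type aside is false. The $\omega_n$ lie on the boundary of the upper half-plane, so boundedness there does not constrain them. For example, $e^{i\pi z}\sin(\pi z)=(e^{2\pi i z}-1)/(2i)$ is bounded on $\Im z\ge 0$ and vanishes at every positive integer. Your reason for discarding it is also wrong. For increasing $\omega_n$, $\limsup n/\omega_n>0$ already forces $\sum 1/\omega_n=\infty$: take blocks $n_{k-1}<n\le n_k$ with $n_k\ge 2n_{k-1}$. So the claim, had it been true, would have proved far too much. Nothing downstream uses it; delete it.

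Third, the step $n_+(\omega_n)\ge n$ silently uses that the $\omega_n$ are distinct and increasing. The statement is false without this. A constant sequence is a counterexample, and so is a rearrangement of $\{j/d\}_{j\ge1}$ with $d<2A$, which is an incomplete system of $d$-periodic functions on $[-A,A]$. Move suitably chosen terms, e.g. $j/d$ with $j=2^k$, to very late indices such as $4^k$; then $\limsup n/\omega_n=\infty$. The paper's citation relies on the same unstated convention, but you should say it explicitly.

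Finally, your opening attribution to ``Carleman's theorem, quoted above'' is inaccurate. The paper quotes Carleman only for the half-system $\{e^{\pi inx/B}\}_{n\ge1}$, and your proof actually rests on the Cartwright--Levinson theorem.
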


\begin{proof}
The first part is a consequence
of the Levinson theorem~\cite[Page 138]{Young2001}.

For the second part, consider the functional
\[
  T:\, \phi\mapsto \int_{-A}^A f(x) \phi(x) \rmd x.
\]
Since $f$ is integrable, $T$ is continuous on $C[-A,A]$.
The hypotheses state  that
$T(e^{2\pi i \omega_n x})=0$ for all $n\ge 1$.
Since the  system $\{e^{2\pi i\omega_n x}\}_{n\ge 1}$ is complete
in $C[-A,A]$, we have $T=0$,
which implies that $f=0$, a.e.
\end{proof}

\subsection{Phase retrieval of compactly supported functions from their Fourier transforms}

As shown in Section~\ref{sec:sub2},
the Fourier transform of a compactly supported integrable
function extends to an entire function of exponential
type.
In some cases,
$f$ and $\overline{f}$ have the same phaseless measurements,
which leads to the study of conjugate phase retrieval \cite{2004entire,2020Conjugate,2021Conjugate,2024Conjugate}.
To distinguish $f$ and $\overline f$, we need more measurements.
McDonald \cite{2004entire} gave some sufficient conditions for phase retrieval of finite order entire functions.
As a consequence, the uniqueness theorem for complex-valued bandlimited functions was established.

The following result is a variant of
\cite[Theorem 1]{2021Conjugate}
and
\cite[Theorem 3]{2004entire}.
Since we have slightly different hypotheses on
signals, to make the paper more readable, we include
a proof here.

\begin{lemma}\label{Lm:L2}
Let $B>0$ be a constant and
$\{\omega_n\}_{n\ge 1} = \{k/(4B)\}_{k\in\mathbb Z}$
or   $\{\omega_n\}_{n\ge 1}$ be a sequence of positive numbers such that
\begin{equation}\label{eq:ea1}
  \limsup_{n\to\infty} \frac{n}{\omega_n} > 4B.
\end{equation}
Suppose that $f,g\in L^1[-B,B]$, $0<b\le 1/(2B)$.
If
\begin{equation}\label{eq:ea2}
 |\hat f(\omega_n)| = |\hat g(\omega_n)|,
 \quad
 |\hat f(\omega_n)-\hat f(\omega_n+b)|
   = |\hat g(\omega_n)-\hat g(\omega_n+b)|,  \quad n\ge 1,
\end{equation}
then either $f\sim g$ or $f\sim g^* $.
\end{lemma}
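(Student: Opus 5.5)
The plan is to pass from the sampled moduli to identities between entire functions, and then use the Hadamard factorization to control the ambiguity.

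\textbf{Step 1: from samples to entire identities.} By \eqref{eq:ea3}, $|\hat f(\omega)|^2=\widehat{h_f}(\omega)$ with $h_f(t)=\int f(t+s)\overline{f(s)}\,\rmd s$ supported in $[-2B,2B]$. Similarly,
\[
|\hat f(\omega)-\hat f(\omega+b)|^2=|\widehat{f_b}(\omega)|^2,\qquad f_b(x)=f(x)(1-e^{-2\pi i bx}),
\]
and $f_b$ is again supported in $[-B,B]$. So both measured quantities are Fourier transforms of $L^1[-2B,2B]$ functions, namely $h_f-h_g$ and $h_{f_b}-h_{g_b}$, and these vanish at the $\omega_n$.

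In the case $\omega_n=k/(4B)$, Proposition~\ref{prop:p3} (with $B$ replaced by $2B$) gives $h_f=h_g$ and $h_{f_b}=h_{g_b}$ a.e. In the case \eqref{eq:ea1}, I apply Proposition~\ref{prop:p2} with $A=2B$ to get the same conclusion.

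Extending to $\mathbb C$, we obtain
\[
\hat f(z)\overline{\hat f(\bar z)}=\hat g(z)\overline{\hat g(\bar z)},\qquad
F_b(z)\overline{F_b(\bar z)}=G_b(z)\overline{G_b(\bar z)},
\]
where $F_b(z)=\hat f(z)-\hat f(z+b)$ and $G_b$ is defined in the same way from $\hat g$.

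\textbf{Step 2: Hadamard factorization.} $\hat f$ and $\hat g$ are of exponential type, hence of order at most $1$. The first identity says that the zero sets of $\hat f$ and $\hat g$ coincide after possibly exchanging zeros $z_j$ with $\bar z_j$. Writing Hadamard products with exponential factors $e^{\alpha z+\beta}$, the identity forces
\[
\hat g(z)=\lambda e^{i c z}\,\hat f(z)\,\prod_{j\in J}\frac{1-z/\bar z_j}{1-z/z_j}
\]
for some $\lambda\in\mathbb T$, real $c$, and some subset $J$ of the zeros (with standard care about convergence, e.g.\ pairing the factors). Since both $f$ and $g$ are supported in $[-B,B]$, comparing indicator diagrams (type in the directions $\pm i$) constrains $c$.

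\textbf{Step 3: the second measurement fixes the subset.} Substituting this form into the second identity gives a relation between $\hat f(z)-\hat f(z+b)$ and the corresponding flipped expression. The goal is to show that $J$ is either empty, which gives $f\sim g$, or all zeros off the real line together with the appropriate $c$, which gives $\hat g\sim\overline{\hat f(\bar z)}$, that is, $g\sim f^*$.

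I would follow the argument of McDonald \cite{2004entire} and \cite[Theorem 1]{2021Conjugate}. The auxiliary function $F_b$ has zeros determined by the ratio $\hat f(z+b)/\hat f(z)=1$. A partial flip of zeros of $\hat f$ changes this ratio by the nonconstant meromorphic factor
\[
\frac{Q(z+b)}{Q(z)},\qquad Q(z)=\prod_{j\in J}\frac{1-z/\bar z_j}{1-z/z_j},
\]
unless $Q$ is $b$-periodic or trivial. A periodic nontrivial flip would require infinitely many zeros on lattices $z_j+b\mathbb Z$. The restriction $b\le 1/(2B)$ is used to rule this out: $\hat f$ has type $2\pi B$, and a zero set invariant under shifts by $b$ would have density $1/b\ge 2B$ in a way incompatible with type and compact support unless the flip is total.

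\textbf{Main obstacle.} I expect Step 3 to be the hardest part, namely turning the equality of $|F_b|^2$ into a rigidity statement on the zero-flip set $J$. There is a particular degenerate case to handle: $\hat f$ and $\hat f(\cdot+b)$ could share zero structure, for instance when $\hat f$ is periodic-like. I would handle it by counting zeros via Jensen's formula in the strip, using the bound $b\le 1/(2B)$.
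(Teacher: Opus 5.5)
Your Step 1 matches the paper's first step. Step 3, however, carries all the content, and as you describe it there is no working mechanism. You never show why $F_b(z)\overline{F_b(\bar z)}=G_b(z)\overline{G_b(\bar z)}$ forces $Q(z+b)/Q(z)$ to be constant. That identity only says the zeros of $G_b$ are those of $F_b$ with an unknown subset flipped. So the remark that a partial flip of the zeros of $\hat f$ changes the equation $\hat f(z+b)/\hat f(z)=1$ gives no contradiction by itself.

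The missing idea is polarization on $\mathbb R$. Step 1 already gives $|\hat f|=|\hat g|$ on all of $\mathbb R$. Expanding $|\hat f(x)-\hat f(x+b)|^2$ then shows that $\hat f(x)\overline{\hat f(x+b)}$ and $\hat g(x)\overline{\hat g(x+b)}$ have equal real parts and equal moduli. Hence at each real $x$ they are equal or complex conjugate. Both extend to entire functions ($z\mapsto \hat f(z)\overline{\hat f(\bar z+b)}$, and similarly for $g$), so by the identity theorem one alternative holds identically. Together with $|\hat f|=|\hat g|$, this makes $W=\hat f/\hat g$ or $W=\hat f/\overline{\hat g}$ a $b$-periodic function. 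It is continuous on $\mathbb R$ because it is meromorphic and unimodular there. This is McDonald's Theorem 1, which the paper simply cites; no Hadamard factorization or flip set $J$ is needed.

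Your use of $b\le 1/(2B)$ also fails at the endpoint. An orbit $z_0+b\mathbb Z$ has density $1/b$. For $b=1/(2B)$ this equals exactly the maximal zero density $2B$ allowed by exponential type $2\pi B$ (compare $\sin(2\pi B(z-z_0))$). So Jensen-type counting cannot exclude it. You would need an extra argument, e.g.\ dividing by the sine, applying Phragm\'en--Lindel\"of, and then using the Riemann--Lebesgue lemma.

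The paper avoids zero counting entirely. Periodicity gives $\hat f(kb)=W(0)\hat g(kb)$ (resp.\ $\hat f(kb)=W(0)\overline{\hat g(kb)}=W(0)\widehat{g^*}(kb)$) for all $k\in\mathbb Z$. Since $f-W(0)g$ (resp.\ $f-W(0)g^*$) lies in $L^1[-B,B]\subset L^1[-\frac{1}{2b},\frac{1}{2b}]$, Proposition~\ref{prop:p3} yields $f=W(0)g$ (resp.\ $f=W(0)g^*$). This uses $b\le 1/(2B)$ exactly, endpoint included.
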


\begin{proof}
Fix  $f$ and $g$.
We see from (\ref{eq:ea3}) that
$H(\omega):=|\hat f(\omega)|^2 - |\hat g(\omega)|^2$
extends to an entire function of the form
\[
  H(z) = \int_{-2B}^{2B} h(t) e^{-2\pi i t z}\rmd t,
\]
where $h\in L^1[-2B,2B]$. It follows from (\ref{eq:ea2}) that
\begin{equation}\label{eq:ea4}
  H(\omega_n) = 0.
\end{equation}
Applying Proposition~\ref{prop:p3}
for $\{\omega_n\}_{n\ge 1} = \{k/(4B)\}_{k\in\mathbb Z}$ or applying
Proposition~\ref{prop:p2} for $\{\omega_n\}_{n\ge 1}$ meeting (\ref{eq:ea1}),
we obtain that
$h(t)=0$, a.e. on $[-2B,2B]$.
Consequently,
\[
  |\hat f(\omega)| = |\hat g(\omega)|,\quad \forall \omega\in\mathbb R.
\]
Similarly,
\[
   |\hat f(\omega)-\hat f(\omega+b)|
   = |\hat g(\omega)-\hat g(\omega+b)|, \quad  \forall \omega\in\mathbb R.
\]
Now, it follows from \cite[Theorem 1]{2004entire}
that either $\hat f = W\hat g$ or $\hat f=W \bar {\hat g}$, where $W$ is a continuous function
with period $b$.

If $\hat f=W\hat g$, then
\[
  \hat f(nb) = W(0) \hat g(nb),\quad \forall n\in\mathbb Z.
\]
It follows from Proposition~\ref{prop:p3} that
$\hat f = W(0) \hat g$ and therefore
$f =  W(0) g$ .

If $\hat f=W \bar {\hat g}$,
similar arguments show that
$  f =W(0) g^*$.
\end{proof}

\subsection{Locally periodic functions}
We call a function $f$ locally periodic
with a period $T$
on an interval $I$
if $f(x)=f(x+T)$ for almost all $x\in I$ with $x+T\in I$.

The following result is known. Since we do not find a reference,
we include a proof here.

\begin{lemma}\label{Lm:L1}
Let $\{a_k\}_{k\ge 1} $ be a sequence of positive numbers
such that
$\lim_{k\to\infty} a_k=0$.
Suppose that $f$ is locally integrable
and locally periodic on some interval
$[x_0,x_1]$
with periods $a_k$, $k\ge 1$. That is,
\[
   f(x) = f(x+a_k),\quad \mbox{ a.e. } x\in [x_0,x_1-a_k], \ \forall k\ge 1.
\]
Then $f(x)=c_0$, a.e. on $[x_0, x_1]$
for some constant $c_0$.
\end{lemma}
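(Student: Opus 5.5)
We plan to show that $f$ has arbitrarily small ``approximate periods'' in an averaged sense, and to conclude that every local average equals a fixed constant. Put $F(x)=\int_{x_0}^x f(t)\,\rmd t$ for $x\in[x_0,x_1]$; $F$ is absolutely continuous. By the Lebesgue differentiation theorem, it suffices to show that $F$ is affine on $[x_0,x_1]$, say $F(x)=c_0(x-x_0)$. Then $f=F'=c_0$ a.e.

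\textbf{Step 1: a difference identity.} We first translate the hypothesis into a statement about $F$. For each $k$ and each $x\in[x_0,x_1-a_k]$, integrating the relation $f(t+a_k)=f(t)$ over $t\in[x_0,x]$ gives
\[
F(x+a_k)-F(x_0+a_k)=F(x).
\]
Hence $D_k(x):=F(x+a_k)-F(x)$ is constant in $x$ on $[x_0,x_1-a_k]$, and equals $F(x_0+a_k)=:d_k$.

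\textbf{Step 2: $F$ agrees with a linear function on a fine grid.} Set $s_k=d_k/a_k$. Iterating Step 1 gives $F(x_0+ja_k)=j\,d_k=s_k\, j a_k$ for every integer $j\ge 0$ with $x_0+ja_k\le x_1$. Thus $F$ coincides with the line $x\mapsto s_k(x-x_0)$ on the grid $x_0+a_k\mathbb Z$.

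\textbf{Step 3: the slopes converge.} Fix any $y\in(x_0,x_1]$ and choose $j_k$ with $x_0+j_ka_k\le y<x_0+(j_k+1)a_k$. Continuity of $F$ gives $F(x_0+j_ka_k)\to F(y)$, and also $j_ka_k\to y-x_0$. It follows that
\[
s_k\to \frac{F(y)-F(x_0)}{y-x_0}.
\]
In particular, taking $y=x_1$ shows that $s_k$ converges to some limit $c_0$. Since the limit of $s_k$ does not depend on $y$, the same computation for arbitrary $y$ yields $F(y)=c_0(y-x_0)$ for every $y$. This is exactly what was needed, so $f=c_0$ a.e.

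The only delicate point is Step 1. There one must check that the a.e. periodicity is used only on $[x_0,x_1-a_k]$, so that the integration stays inside the interval where the hypothesis holds. One must also check that the iteration in Step 2 stays within the range where $D_k$ is constant, i.e.\ that each point $x_0+ja_k$ used as a base lies in $[x_0,x_1-a_k]$. Both are routine bookkeeping. The key idea is that the averaged (integrated) relation holds pointwise and everywhere, which removes the a.e. issues.
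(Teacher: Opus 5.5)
Your proof is correct. It shares its key identity with the paper's proof but reaches the conclusion by a different limiting step.

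Both arguments rest on the fact that $\int_x^{x+a_k} f(t)\,\rmd t$ does not depend on $x\in[x_0,x_1-a_k]$. The paper asserts this in one phrase (``a periodic function has equal integrals over any interval whose length equals its period''). Your Step~1 actually derives it from the a.e.\ local periodicity, and that derivation is the one point that needs care.

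The two arguments differ in how they identify the limit of the common value $s_k=d_k/a_k$.

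\textbf{The paper's route.} It shrinks the window at a point. At every Lebesgue point $x$ one has $\frac{1}{a_k}\int_x^{x+a_k} f(t)\,\rmd t\to f(x)$. Hence $f(x)=f(y)$ for any two Lebesgue points, and the proof ends in two lines.

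\textbf{Your route.} You tile $[x_0,y]$ by $j_k$ windows of length $a_k$, so $s_k$ is the mean of $f$ over $[x_0,x_0+j_ka_k]$. Using only continuity of $F$, you get $s_k\to F(y)/(y-x_0)$ for every $y$, which shows that $F$ is affine. The Lebesgue differentiation theorem is then needed only once, at the end, to pass from $F$ to $f$.

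The paper's route is shorter. Yours keeps every limit at the level of the continuous primitive and postpones pointwise (Lebesgue-point) considerations to a single final step. It also identifies the constant explicitly as $c_0=\frac{1}{x_1-x_0}\int_{x_0}^{x_1} f(t)\,\rmd t$.
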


\begin{proof}
Let $x,y$ be Lebesgue points of $f$ in $(x_0,x_1)$. Since a periodic function has equal integrals over any interval whose length equals its period, for $a_k$ small enough,
we have
\[
  \frac{1}{a_k}  \int_x^{x+a_k}  f(t)\rmd t
  =\frac{1}{a_k}  \int_y^{y+a_k}  f(t)\rmd t.
\]
Letting $k\to\infty$, we obtain that
\[
  f(x) = f(y).
\]
Since $f$ is locally integrable, almost every $x\in(x_0,x_1)$ is a Lebesgue point of $f$.
Fix a Lebesgue point $x_0$. Then we have
\[
  f(x) = f(x_0),\quad a.e.
\]
\end{proof}

\section{Phase retrieval from semi-discrete samples}

In this section, we consider the uniqueness of STFT
phase retrieval.
Since the window functions are compactly supported,
we need only some local properties of signals.
Specifically, we consider signals which are locally
integrable.

The following lemma shows that
the STFT of a locally integrable function
is well defined.

\begin{lemma}\label{Lm:L0}
Let $B>0$  and $\phi \in  L^1(\mathbb{R})$ with $\operatorname{supp}( \phi) \subset
[-B, B]$.
Suppose that $f\in L_{loc}^1(\mathbb R)$. Then
for
almost all $t$ in $\mathbb R$,
$f \phi(\cdot-t)$ is integrable on $\mathbb R$.
\end{lemma}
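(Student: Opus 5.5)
The plan is to apply Tonelli's theorem to the function $(x,t)\mapsto |f(x)\phi(x-t)|$ restricted to a strip in $t$. It is enough to show that for every $N\in\mathbb N$ the integral $\int_{\mathbb R}|f(x)\phi(x-t)|\,\rmd x$ is finite for almost every $t\in[-N,N]$. Taking the countable union over $N$ of the exceptional null sets then gives the claim for almost all $t\in\mathbb R$.

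Fix $N$. The function $(x,t)\mapsto f(x)\phi(x-t)$ is measurable on $\mathbb R^2$, since it is the product of $f(x)$ with the composition of $\phi$ with the measurable linear map $(x,t)\mapsto x-t$. Because $\operatorname{supp}\phi\subset[-B,B]$, the integrand vanishes unless $|x-t|\le B$. For $|t|\le N$ this forces $|x|\le N+B$. By Tonelli's theorem,
\begin{align*}
\int_{-N}^{N}\int_{\mathbb R}|f(x)\phi(x-t)|\,\rmd x\,\rmd t
&=\int_{-N-B}^{N+B}|f(x)|\int_{-N}^{N}|\phi(x-t)|\,\rmd t\,\rmd x\\
&\le \|\phi\|_{L^1}\int_{-N-B}^{N+B}|f(x)|\,\rmd x<\infty .
\end{align*}
The last integral is finite because $f\in L^1_{loc}(\mathbb R)$. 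Since the double integral is finite, the inner integral $\int_{\mathbb R}|f(x)\phi(x-t)|\,\rmd x$ is finite for almost every $t\in[-N,N]$, that is, $f\phi(\cdot-t)\in L^1(\mathbb R)$ for those $t$.

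No step is a real obstacle. The only points needing care are the joint measurability of the integrand, which is needed to invoke Tonelli, and the localization of $x$ to a bounded interval. That localization is what lets local integrability of $f$ substitute for global integrability.
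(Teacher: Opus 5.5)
Your proof is correct and takes the same approach as the paper: Tonelli/Fubini on a bounded $t$-strip, with the bound $\|\phi\|_{L^1}\|f\|_{L^1[a-B,b+B]}$ coming from the support of $\phi$. Your version is slightly more careful than the paper's, since you restrict to the countable family $[-N,N]$ and state the joint measurability needed for Tonelli, whereas the paper only says ``$a,b$ are arbitrary.''
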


\begin{proof}
For any $a<b$, we see from Fubini's theorem that
\begin{align*}
  \int_a^b \int_{\mathbb R} |f(x)\phi(x-t)| \rmd x\, \rmd t
&=  \int_a^b \int_{|x-t|\le B} |f(x)\phi(x-t)| \rmd x\, \rmd t \\
&\le  \int_{a-B}^{b+B} \int_{|x-t|\le B} |f(x)\phi(x-t)| \rmd t\, \rmd x \\
&=\|\phi\|_{L^1} \|f\|_{L^1[a-B,b+B]}<\infty.
\end{align*}
Hence for almost all $t\in [a,b]$,
$f\phi(\cdot-t)\in L^1$.
Since $a,b$ are arbitrary,
$f\phi(\cdot-t)$ is integrable for almost all $t
\in\mathbb R$.
\end{proof}

Before giving a result on STFT phase retrieval
of locally integrable functions, we present
a result on local STFT phase retrieval,
which is useful in our study.

Let $B>0$ be a constant.
Suppose that the window function $\phi$ satisfies
\begin{align}  \label{eq:phi}
\supp  \phi \subset
[-B, B],\quad \phi^*(x)=\phi(x)
\mbox{\quad and \quad }
\phi(x) \ne 0, \mbox{ a.e. } x\in [-B, B].
\end{align}

\begin{lemma}\label{lem1}
Let $\phi \in  L^1(\mathbb{R})$ meet  (\ref{eq:phi}).
Suppose that $\{\omega_n\}_{n\ge 1} = \{k/(4B)\}_{k\in\mathbb Z}$
or $\{\omega_n\}_{n\ge 1}$ is a sequence of positive numbers
satisfying (\ref{eq:ea1}).
Let $f, g\in L^1_{loc}(\mathbb R)$. Then
for almost all $t\in\mathbb R$,
both $f \phi(\cdot-t)$ and $g\phi(\cdot -t)$ are integrable on $\mathbb R$.
Fix such a number $t_{0}$. Then the following two assertions
 are equivalent.

\begin{enumerate}
\item There is some $\lambda_0\in\mathbb T$ such that
either
\begin{equation}\label{eq:ea5}
f(x) = \lambda_0 g(x),\quad \mbox{ a.e. \, on } \, [t_0-B , t_0+B]
\end{equation}
or
\begin{equation}\label{eq:ea6}
  f(x) = \lambda_0 \overline{g(-x+2t_{0})},\quad \mbox{ a.e. \, on } \, [t_0-B , t_0+B].
\end{equation}

\item

\begin{align}\label{equ1}
\left\{\begin{array}{l}
|\mathcal{V}_{\phi} f(t_{0},\omega_{n})|=|\mathcal{V}_{\phi} g(t_{0},\omega_{n})|, \\
|\mathcal{V}_{\psi} f(t_{0},\omega_{n})|=|\mathcal{V}_{\psi} g(t_{0},\omega_{n})|,
\end{array}\right.
 \quad \forall n \ge 1,
\end{align}
where $\psi(x)=\phi(x) (e^{ 2\pi ixb}-1) $, $0<b\le 1/(2B)$.
\end{enumerate}
\end{lemma}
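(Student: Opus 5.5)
The plan is to reduce the statement to Lemma~\ref{Lm:L2} applied to the localized functions $F(x)=f(x+t_0)\overline{\phi(x)}$ and $G(x)=g(x+t_0)\overline{\phi(x)}$. Both lie in $L^1[-B,B]$ by the choice of $t_0$ (Lemma~\ref{Lm:L0} applied to $f$ and $g$ separately; the union of the two null sets is null). The integrability assertion is exactly Lemma~\ref{Lm:L0}.

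First I will express the STFT through $\hat F$. Substituting $x\mapsto x+t_0$ gives
\[
\mathcal V_\phi f(t_0,\omega)=e^{-2\pi i t_0\omega}\hat F(\omega).
\]
For $\psi$ we have $\overline{\psi(x)}=\overline{\phi(x)}(e^{-2\pi i xb}-1)$, so
\[
\mathcal V_\psi f(t_0,\omega)=e^{-2\pi i t_0\omega}\big(\hat F(\omega+b)-\hat F(\omega)\big).
\]
Hence (\ref{equ1}) is equivalent to $|\hat F(\omega_n)|=|\hat G(\omega_n)|$ together with $|\hat F(\omega_n)-\hat F(\omega_n+b)|=|\hat G(\omega_n)-\hat G(\omega_n+b)|$ for all $n\ge1$. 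These are precisely the hypotheses (\ref{eq:ea2}) of Lemma~\ref{Lm:L2}.

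For (2)$\Rightarrow$(1), Lemma~\ref{Lm:L2} yields $F\sim G$ or $F\sim G^*$.

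\emph{Case $F=\lambda_0 G$.} We get $f(x+t_0)\overline{\phi(x)}=\lambda_0 g(x+t_0)\overline{\phi(x)}$. Since $\phi\neq0$ a.e. on $[-B,B]$, we can divide and obtain (\ref{eq:ea5}).

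\emph{Case $F=\lambda_0 G^*$.} Here $G^*(x)=\overline{g(-x+t_0)}\phi(-x)$. The symmetry $\phi^*=\phi$ means $\phi(-x)=\overline{\phi(x)}$, so $G^*(x)=\overline{g(t_0-x)}\,\overline{\phi(x)}$. Dividing by $\overline{\phi(x)}$ gives $f(x+t_0)=\lambda_0\overline{g(t_0-x)}$ a.e. on $[-B,B]$. Setting $y=x+t_0$ turns this into (\ref{eq:ea6}).

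This symmetry step is where the hypotheses (\ref{eq:phi}) are actually used. It is the only delicate point: we must check that $\phi^*=\phi$ makes the reflected localization coincide with the localization of the reflected $g$.

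For (1)$\Rightarrow$(2), I run the same computation backwards. Under (\ref{eq:ea5}) we have $F=\lambda_0G$, so $\hat F=\lambda_0\hat G$ and both moduli in (\ref{equ1}) agree.

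Under (\ref{eq:ea6}) the computation above gives $F=\lambda_0G^*$. Then $\hat F(\omega)=\lambda_0\overline{\hat G(\omega)}$ for real $\omega$. Consequently $|\hat F(\omega)|=|\hat G(\omega)|$ and $|\hat F(\omega)-\hat F(\omega+b)|=|\hat G(\omega)-\hat G(\omega+b)|$ for every real $\omega$, and in particular at the points $\omega_n$.

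I expect no serious obstacle. The main care is in tracking conjugations and the sign of $b$ in the $\psi$ formula. Note that $\hat F(\omega+b)-\hat F(\omega)$ has the same modulus as $\hat F(\omega)-\hat F(\omega+b)$, so the sign convention causes no trouble.
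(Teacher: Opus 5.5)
Your proposal is correct and follows essentially the same route as the paper's proof. You localize to $F=f(\cdot+t_0)\overline{\phi}$ and $G=g(\cdot+t_0)\overline{\phi}$, write $\mathcal V_\psi f(t_0,\omega)$ as a phase times $\hat F(\omega+b)-\hat F(\omega)$, invoke Lemma~\ref{Lm:L2}, and then divide by $\overline{\phi}$ using $\phi^*=\phi$. The one cosmetic difference is in (1)$\Rightarrow$(2) under (\ref{eq:ea6}): you go through $\hat F=\lambda_0\overline{\hat G}$ on $\mathbb R$, while the paper does a direct change of variables and uses $\psi^*=\psi$; these amount to the same computation.
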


\begin{proof}
By Lemma~\ref{Lm:L0}, for almost all $t\in\mathbb R$,
both $f \phi(\cdot-t)$ and $g\phi(\cdot -t)$ are integrable on $\mathbb R$.

(i) $\Rightarrow$ (ii). Note that if (\ref{eq:ea5}) holds, then
\begin{align*}
|\mathcal{V}_{\phi} f(t_{0},\omega_{n})|=|\mathcal{V}_{\phi} g(t_{0},\omega_{n})|,~
|\mathcal{V}_{\psi} f(t_{0},\omega_{n})|=|\mathcal{V}_{\psi} g(t_{0},\omega_{n})|,\quad \forall n\ge 1.
\end{align*}
On the other hand, assume that (\ref{eq:ea6}) holds. By the definition of STFT, we calculate that
\begin{align}\label{equ3}
\mathcal{V}_{\phi} f(t_{0}, \omega)
&=\int_{\mathbb{R}} f(x) \overline{\phi(x-t_{0})} e^{-2 \pi i x \omega} \rmd x\nonumber \\
&=e^{-2 \pi i t_{0} \omega }\int_{-B}^{B}f(x+t_{0}) \overline{\phi(x)} e^{-2 \pi i x \omega} \rmd x.
\end{align}
Then together with (\ref{eq:ea6}), we have
\begin{align*}
\left|\mathcal{V}_{\phi} f(t_{0}, \omega)\right|&=\left|\int_{-B}^{B}f(x+t_{0}) \overline{\phi(x)} e^{-2 \pi i x \omega} \rmd x\right|\\&=\left|\int_{-B}^{B}\overline{g(-x+t_{0})} \overline{\phi(x)} e^{-2 \pi i x \omega} \rmd x\right|\\&=\left| \int_{-B}^{B}g(x+t_{0}) \phi(-x) e^{-2 \pi i x \omega} \rmd x\right|.
\end{align*}
Since $\overline{\phi(-x)}=\phi(x)$, we obtain that
\begin{align*}
\left|\mathcal{V}_{\phi} f(t_{0}, \omega)\right|&=\left|\int_{-B}^{B}g(x+t_{0}) \overline{\phi(x)} e^{-2 \pi i x \omega} \rmd x\right|\\
&=\left|\mathcal{V}_{\phi} g(t_{0}, \omega)\right|.
\end{align*}
Observe that $\overline{\psi(-x)}=\psi(x)$. Similar arguments show that
\begin{align*}
\left|\mathcal{V}_{\psi} f(t_{0}, \omega)\right|=\left|\mathcal{V}_{\psi} g(t_{0}, \omega)\right|.
\end{align*}

(ii) $\Rightarrow$ (i). Assume that (\ref{equ1}) holds.
By the hypotheses,
  $f(\cdot+t_{0}) \overline{\phi(\cdot)}\in L^1(\mathbb{R})$
and $\operatorname{supp}( f(\cdot+t_{0}) \overline{\phi(\cdot)} )\subset [-B, B]$.
We see from the definition of Fourier transform that
\begin{align}\label{equ16}
M_{t_{0}}\mathcal{V}_{\phi} f(t_{0},\omega)=\int_{-B}^{B}f(x+t_{0}) \overline{\phi(x)} e^{-2 \pi i x \omega} \rmd x.
\end{align}
Similarly,
\begin{align}\label{equ16a}
M_{t_{0}}\mathcal{V}_{\phi} g(t_{0},\omega)=\int_{-B}^{B}g(x+t_{0}) \overline{\phi(x)} e^{-2 \pi i x \omega} \rmd x.
\end{align}
Note that
\begin{align}\label{equ3.5}
&|M_{t_{0}}\mathcal{V}_{\phi} f(t_{0},\omega+b)-M_{t_{0}}\mathcal{V}_{\phi} f(t_{0},\omega)| \nonumber \\=&\left|\int_{-B}^{B}f(x+t_{0}) \overline{\phi(x)} e^{-2 \pi i x \omega}(e^{-2 \pi i x b}-1) \rmd x\right| \nonumber \\=&\left|\int_{-B}^{B}f(x+t_{0}) \overline{\psi(x)} e^{-2 \pi i x \omega} \rmd x\right| \nonumber \\=&\left|\mathcal{V}_{\psi} f(t_{0},\omega)\right|.
\end{align}
By Lemma~\ref{Lm:L2} and (\ref{equ1}),   there exists some $\lambda_0 \in \mathbb{T}$ such that
\begin{align*}
f( x+t_{0}) \overline{\phi( x)}=\lambda_0 g( x+t_{0}) \overline{\phi( x)},\quad  a.e.
\end{align*}
or
\begin{align*}
f( x+t_{0}) \overline{\phi( x)}=\lambda_0 \overline{g(-x+t_{0})} \phi(-x),\quad a.e.
\end{align*}

Since $\overline{\phi(-x)}=\phi(x)\ne 0$, a.e. on $[-B,B]$, we have
\[
f(x) = \lambda_0 g(x),\quad \mbox{ a.e.  on } \, [t_0-B, t_0+B]
\]
or
\[
  f(x) = \lambda_0 \overline{g(-x+2t_{0})},\quad \mbox{ a.e.  on } \, [t_0-B, t_0+B].
\]
\end{proof}

To study STFT phase retrieval on the whole line,
we also need the following lemma.

\begin{lemma}\label{Lm:L3}
Let $B>0$ be a constant.
Suppose that $f,g$ are locally integrable functions on $\mathbb R$
and for almost all $t\in\mathbb R$,
there exist $\lambda(t)\in\mathbb T$ such that
\begin{equation}\label{eq:eb1}
  f(x) = \lambda(t) \overline{g(-x+2t)},
  \quad \mbox{ a.e. } x\in [t-B, t+B].
\end{equation}
Then $f(x) = \mu g(x) = c_0 e^{iax/2}$, a.e.
for some constants $\mu\in\mathbb T$,
 $c_0\in\mathbb C$ and $a\in\mathbb R$.
\end{lemma}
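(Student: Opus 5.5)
The plan is to compare the identity \eqref{eq:eb1} at two nearby centers. This will show that $g$ is locally quasi-periodic with a unimodular multiplier, for every small shift. Lemma~\ref{Lm:L1} then forces $|g|$ to be constant, and then forces $g$ to be a pure exponential. Let $E\subset\mathbb R$ be the full-measure set of $t$ for which \eqref{eq:eb1} holds. Take $t,s\in E$ with $0<|t-s|<B$. On the overlap $[t-B,t+B]\cap[s-B,s+B]$, which has length $2B-|t-s|>B$, we get
\[
  \lambda(t)\overline{g(2t-x)} = \lambda(s)\overline{g(2s-x)},\quad \mbox{a.e.}
\]
Substituting $y=2s-x$ and $h=2(t-s)$ gives
\[
  g(y+h) = c(t,s)\, g(y),\quad c(t,s)=\overline{\lambda(s)}\lambda(t)\in\mathbb T,
\]
for a.e.\ $y$ in an interval of length $>B$ centred near $s$. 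Since $E$ has full measure, for a.e.\ $s$ the set of $t\in E$ with $|t-s|$ small has full measure in a neighbourhood of $s$. Hence for a.e.\ $s$ we obtain such relations for a sequence of shifts $h_k\to0$.

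First, taking moduli, $|g|$ is locally periodic with periods $|h_k|\to 0$ on an interval around $s$. By Lemma~\ref{Lm:L1}, $|g|$ is a.e.\ constant there. These intervals overlap and cover $\mathbb R$ (up to a null set of centers), so $|g|=r$ a.e.\ on $\mathbb R$. Likewise \eqref{eq:eb1} gives $|f|=|g(2t-\cdot)|=r$. If $r=0$, then $f=g=0$ a.e. and the claim holds with $c_0=0$. So assume $r>0$; then $g$ vanishes nowhere (a.e.).

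Next, I will determine the multiplier. For small $h$, which need not be of the form $2(t-s)$ with $t,s\in E$, define on a fixed interval $I$ around a good point
\[
  c(h)=\frac{1}{r^2|I|}\int_I g(y+h)\overline{g(y)}\,\rmd y .
\]
By continuity of translation in $L^1_{loc}$, $c$ is continuous in $h$. For $h=2(t-s)$ we have $c(h)=c(t,s)$, so $c(h)$ is unimodular and $g(\cdot+h)=c(h)g$ on $I$. By density of the admissible differences and continuity of both sides in $L^1(I)$, the relation $g(\cdot+h)=c(h)g(\cdot)$ holds a.e.\ on $I$ for all small $h$. It then follows that $c(h_1+h_2)=c(h_1)c(h_2)$ for small $h_1,h_2$. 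A continuous local homomorphism into $\mathbb T$ is $c(h)=e^{i\alpha h}$ for some real $\alpha$. Consequently $G(y)=e^{-i\alpha y}g(y)$ satisfies $G(y+h)=G(y)$ for all small $h$. Lemma~\ref{Lm:L1} makes $G$ constant on $I$, so $g=\tilde c\, e^{i\alpha y}$ there. Since overlapping intervals force the same $\alpha$ and the same constant, this holds globally. Set $a=2\alpha$.

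Finally, insert this form into \eqref{eq:eb1}:
\[
  f(x)=\lambda(t)\overline{\tilde c}\,e^{-i\alpha(2t-x)}=\big(\lambda(t)\overline{\tilde c}\,e^{-2i\alpha t}\big)e^{i\alpha x},
\]
for a.e.\ $x\in[t-B,t+B]$. The bracketed constant has modulus $r=|\tilde c|$, and it must agree on overlapping windows. Therefore $f=c_0e^{iax/2}$ with $|c_0|=|\tilde c|$, and so $f=\mu g$ with $\mu=c_0/\tilde c\in\mathbb T$.

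The main obstacle is the measure-theoretic bookkeeping. The relations hold only for a.e.\ $t$ and a.e.\ $x$, and the multiplier $c$ is first defined only on differences of points of $E$. I will handle this via the averaged, continuous definition of $c(h)$ and an $L^1$-continuity argument before invoking Lemma~\ref{Lm:L1}.
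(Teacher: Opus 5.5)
Your proof is correct. Its skeleton matches the paper's: you compare (\ref{eq:eb1}) at two centers to get a quasi-periodicity relation with a unimodular multiplier, use Lemma~\ref{Lm:L1} to get $|f|=|g|=r$, show the multiplier is a local character $e^{i\alpha h}$, and propagate to $\mathbb R$.

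The real difference is how the multiplier is regularized. The paper keeps the multiplier supplied by the hypothesis, $\Lambda(t)=\lambda(t+t_0)\overline{\lambda(t_0)}$, which is only defined on a full-measure set. It therefore has to:
\begin{enumerate}
\item argue that $\Lambda$ is measurable, via joint measurability of $f(x-2t)$;
\item obtain $\Lambda(s+t)=\Lambda(s)\Lambda(t)$ only for almost all $(s,t)$, through Fubini arguments on the sets $E_0,E_1,E_2$;
\item integrate that equation to replace $\Lambda$ by a continuous $\tilde\Lambda$ before solving it;
\item use Fubini once more to read off $f(x)=c_0e^{iax/2}$ near a good point.
\end{enumerate}
You instead define the multiplier intrinsically as the normalized correlation $c(h)$. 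It is continuous automatically, and $L^1$-continuity of translation upgrades $g(\cdot+h)=c(h)g$ from the dense set of admissible shifts to every small $h$. The homomorphism property then holds for all small $h_1,h_2$ rather than almost all, no measurability of $t\mapsto\lambda(t)$ is needed, and Lemma~\ref{Lm:L1} applied to $e^{-i\alpha y}g$ finishes the local step.

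You also compare two copies of (\ref{eq:eb1}) directly, which yields a relation for $g$ alone. The paper instead passes through the inverted relation (\ref{eq:eb4}) to get one for $f$.

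Finally, because your local argument runs around any $s_0\in E$, you can cover $\mathbb R$ by overlapping intervals. This replaces the paper's $\inf/\sup$ continuation arguments in both steps (i) and (iii).

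Net effect: your route is shorter and handles the a.e.\ bookkeeping more transparently. The paper's route stays with the given multipliers $\lambda(t)$ and never introduces an auxiliary object, but pays for it with the extra measure-theoretic steps.
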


\begin{proof}
By the hypotheses, there is some $E\subset \mathbb R$
such that $|\mathbb R\setminus E|=0$ and for every $t\in E$, (\ref{eq:eb1}) holds.
We prove the conclusion in three steps.

(i)\, $|f|$ is a constant almost everywhere.

By (\ref{eq:eb1}),
for any $t\in E$,
\[
 | f(x)| = |g(-x+2t)|,
  \quad \mbox{ a.e. } x\in [t-B, t+B].
\]
Fix some $\varepsilon\in (0,B/3)$ and $t_0\in E\cap (0,\varepsilon)$. We have
\[
 | g(x)| = |f(-x+2t_0)|,
  \quad \mbox{ a.e. } x\in [t_0-B,t_0+B].
\]
And for any $t_1\in E\cap (0,\varepsilon)$,
\[
  |f(x)| = |g(-x+2t_1)| = |f(x-2(t_1-t_0))|,
  \quad \mbox{ a.e. } x\in [-(B-3\varepsilon), B-3\varepsilon].
\]
That is, $|f|$ is locally periodic
with period $2|t_1-t_0|$ which can be arbitrarily small.
By Lemma~\ref{Lm:L1}, we have
for almost all $ x\in  [ -(B-3\varepsilon), B-3\varepsilon ]$,
\begin{equation}\label{eq:eb7}
  |f(x)|=|g(x)|=C_0,
\end{equation}
where $C_0$ is a constant. Letting $\varepsilon\to 0$
yields that (\ref{eq:eb7}) is true for almost all
$x\in [-B,B]$.

Let
\begin{align*}
a_0 &= \inf\{a':\, \mbox{(\ref{eq:eb7}) holds for almost all } x\in [a', B] \}, \\
b_0 &= \sup\{b':\, \mbox{(\ref{eq:eb7}) holds for almost all } x\in [-B,b'] \}.
\end{align*}
Then $a_0\le -B$,
$b_0\ge B$ and
$|f|=|g|=C_0$ almost everywhere on $(a_0,b_0)$.
We conclude that $a_0=-\infty$
and $b_0=\infty$.

Assume on the contrary that $b_0<\infty$.
Take some $t_1 \in  E\cap [b_0-B/2, b_0-B/4]$.
When $x\in [ b_0, b_0+B/4]\subset [t_1-B,t_1+B]$,
$-x+2t_1 \in [b_0-5B/4, b_0-B/2]
\subset [-B, b_0]$. Hence
\begin{align*}
  |f(x)|&= |g(-x+2t_1)| =C_0,
  \quad \mbox{ a.e. } x\in [b_0, b_0+\frac{B}{4}],\\
  |g(x)|&= |f(-x+2t_1)| =C_0,
  \quad \mbox{ a.e. } x\in [b_0, b_0+\frac{B}{4}].
\end{align*}
Hence (\ref{eq:eb7}) holds almost everywhere
on $[b_0,b_0+B/4]$, which contradicts the choice of
$b_0$. Thus $b_0=\infty$.
Similar arguments yield that $a_0=-\infty$.

It suffices to consider the case $C_0>0$.

(ii) There exist
constants $\mu\in\mathbb T$,
 $c_0\in\mathbb C$, $a\in\mathbb R$ and $\delta>0$
 such that
 for almost all $x\in [-\delta, \delta]$,
\begin{equation}\label{eq:eb5}
  f(x) = \mu g(x)=c_0 e^{iax/2}.
\end{equation}

Fix some
$\varepsilon_0<B/12$
and $t_0\in E\cap [0,\varepsilon_0]$.
Denote
\[
  E_0 = E\cap (E-t_0),
\]
where $E-t_0 = \{t-t_0:\, t\in E\}$. Then $|\mathbb R\setminus E_0|=0$.
We see from
 (\ref{eq:eb1}) that
\[
  f(x) = \lambda(t_0) \overline{g(-x+2t_0)},\quad
  \mbox{ a.e. } x\in [t_0-B , t_0+B ].
\]
Hence
\begin{equation}\label{eq:eb4}
  g(x) = \lambda(t_0) \overline{f(-x+2t_0)},\quad
  \mbox{ a.e. } x\in  [t_0-B , t_0+B ].
\end{equation}
It follows that when $t\in E\cap [-B/3, B/3]$,
\[
  f(x) = \lambda(t) \overline{g(-x+2t)}
     = \lambda(t) \overline{\lambda(t_0) }
      f(x-2(t-t_0)),
      \quad
  \mbox{ a.e. } x\in [-\frac{B}{3}+\varepsilon_0, \frac{B}{3}-\varepsilon_0].
\]
Now the assumption $\varepsilon_0<B/12$ yields that
when $t\in E_0\cap [-B/4, B/4]$,
\begin{equation}\label{eq:eb3}
  f(x) = \lambda(t+t_0) \overline{g(-x+2(t+t_0))}
     = \lambda(t+t_0) \overline{\lambda(t_0) }
      f(x-2t),
      \quad
  \mbox{ a.e. } x\in
  \Big[-\frac{B}{4}, \frac{B}{4}\Big].
\end{equation}
Denote $\Lambda(t) = \lambda(t+t_0)\overline{\lambda(t_0) }$.
We obtain that
when $t\in E_0\cap [-B/4, B/4]$,
\begin{equation}\label{eq:eb3a}
  f(x) = \Lambda(t)
      f(x-2t),
      \quad
  \mbox{ a.e. } x\in
  \Big[-\frac{B}{4}, \frac{B}{4}\Big].
\end{equation}

For any $x_1<x_2$, we see from Fubini's theorem that
\[
  \int_{x_1}^{x_2} \int_{x_1}^{x_2}
    1_{E_0}(s)1_{E_0}(t) 1_{E_0}(s+t) \rmd s\rmd t
    = (x_2-x_1)^2.
\]
Hence for almost all $(s,t)\in E_0\times E_0$,
$s+t\in E_0$.
It follows that for almost all
$(s,t)$ in
\[
  F:= (E_0\times E_0 )\bigcap \Big(\Big[-\frac{B}{16},\frac{B}{16}\Big] \times \Big[-\frac{B}{16},\frac{B}{16}\Big]\Big),
\]
we have
\begin{align*}
f(x) &= \Lambda(s+t) f(x-2s-2t) \\
     &= \Lambda(s) f(x-2s)
      = \Lambda(s)\Lambda(t) f(x-2s-2t),
      \quad
  \mbox{ a.e. } x\in
  \Big[-\frac{B}{8}, \frac{B}{8}\Big].
\end{align*}
Since
$|f(x)|=C_0>0$, a.e.,
there is some $x_0$ such that $f(x_0)\ne 0$
and the above  equalities hold.
It follows  that for almost all $(s,t)\in F$,
\begin{equation}\label{eq:eb2}
\Lambda(s+t) = \Lambda(s)\Lambda(t).
\end{equation}

Note that  $f(x-2t)$ is measurable with respect to $(x,t)$.
We see from (\ref{eq:eb3a}) that $\Lambda(t)$ is measurable on $[-B/4, B/4]$.
On the other hand, the fact that $|\Lambda(t)|=1$ for all $t\in E_0$ yields that
there exist  $-B/16<t_1< t_2<B/16$ such that
$\int_{t_1}^{t_2} \Lambda(t) \rmd t\ne 0$.
Integrating both sides of (\ref{eq:eb2})
with respect to $t$ yields that
\[
    \int_{s+t_1}^{s+t_2} \Lambda(t) \rmd t
    = \Lambda(s) \int_{t_1}^{t_2} \Lambda(t) \rmd t,
    \quad \mbox{ a.e. } s\in E_0\cap[-\frac{B}{16}, \frac{B}{16}].
\]
Hence $\Lambda(s)$ equals a continuous function
$\tilde\Lambda(s)$ almost everywhere on $[-B/16,B/16]$.

Let $E_1$ be the subset of $E_0\cap [-B/16,B/16]$ such that
$\Lambda(s)=\tilde\Lambda(s)$ for all $s\in E_1$.
Then $|[-B/16,B/16]\setminus E_1|=0$.
It follows that
\[
  \int_{-B/32}^{B/32} \int_{-B/32}^{B/32}
    1_{E_1}(s)1_{E_1}(t) 1_{E_1}(s+t) \rmd s\rmd t
    = \frac{B^2}{256}.
\]
Consequently, for almost all $(s,t)\in E_2\times E_2$,
where $E_2=E_1\cap [-B/32,B/32]$,
$s+t\in E_1$. Hence
for almost all $s,t\in E_2$,
\begin{equation}\label{eq:eb2a}
  \tilde\Lambda(s+t) = \tilde\Lambda(s)\tilde\Lambda(t),
\end{equation}
Since $\tilde\Lambda$ is continuous on $[-B/16,B/16]$,
(\ref{eq:eb2a}) holds for all $s,t\in [-B/32,B/32]$.

Setting $s=t=0$ yields that $\tilde\Lambda(0)=1$. Hence
there is some $0<\delta<B/32$ such that
$|\arg(\tilde\Lambda(s))|<\pi/2$ when $|s|<\delta$.
Consequently, $\tilde \Lambda$ has a representation
\[
  \tilde \Lambda(t) = e^{ih(t)},
\]
where $h(t)$ is a continuous real valued function
for which $|h(t)|<\pi/2$ when $|t|<\delta$.
We see from (\ref{eq:eb2a}) that
for all $s,t\in [-\delta/2, \delta/2]$,
\[
  h(s+t) = h(s) + h(t).
\]
Since $h$ is continuous, we have
\[
  h(t) = at, \quad |t|<\delta
\]
for some constant $a\in\mathbb R$. Consequently,
\[
  \tilde \Lambda(t) = e^{iat}, \quad |t|<\delta.
\]
By (\ref{eq:eb3a}), for almost all $t\in E_0\cap [-\delta,\delta]$, we have
\[
f(x) = e^{iat} f(x-2t),
 \mbox{ a.e. } x\in \Big[-\frac{B}{4}, \frac{B}{4}\Big].
\]
Applying Fubini's theorem yields that
\begin{align*}
\int_{-B/4}^{B/4}
   \int_{-\delta}^{\delta} |f(x) - e^{iat} f(x-2t)| \rmd t \, \rmd x
&=
   \int_{-\delta}^{\delta} \int_{-B/4}^{B/4}
|f(x) - e^{iat} f(x-2t)|  \rmd x\, \rmd t = 0.
\end{align*}
Hence for almost all $x\in [-B/4, B/4]$,
\[
  f(x) = e^{iat} f(x-2t),\quad
  \mbox{ a.e. } t\in [-\delta,\delta].
\]
Since $|f(x)|>0$, a.e., there is
some  $x_0$ in $(-\delta/2,\delta/2)$ such that  $f(x_0)\ne 0$ and
\[
  f(x_0) = e^{iat} f(x_0-2t),\quad
  \mbox{ a.e. } t\in [-\delta,\delta].
\]
Hence
\[
  f(x) = c_0 e^{iax/2},
  \quad   \mbox{ a.e. } x\in [x_0-2\delta,x_0+2\delta],
\]
where $c_0$ is a constant. By (\ref{eq:eb1}),
there is some $t'_0\in E\cap (-\delta/4,\delta/4)$
such that
\[
  g(x) = \lambda(t'_0)\overline{f(-x+2t'_0)}
  =
  \lambda(t'_0)
   \bar c_0 e^{-iat_0} e^{iax/2},
  \,  \mbox{ a.e. } x\in [-x_0-2\delta+2t'_0,-x_0+2\delta+2t'_0].
\]
Since $|x_0|<\delta/2$,
we have
\[
  f(x) = \mu g(x)= c_0 e^{iax/2},\quad \mbox{ a.e. } x\in [-\delta, \delta],
\]
where $\mu = \overline{\lambda(t'_0)}  c_0e^{iat_0}/\bar c_0$.
That is, (\ref{eq:eb5}) holds for almost all $
x\in [-\delta, \delta]$.

(iii) (\ref{eq:eb5}) holds for almost all $
x\in \mathbb R$.

Let
\begin{align*}
a_0 &= \inf\{a':\, \mbox{(\ref{eq:eb5}) holds for almost all } x\in [a', \delta] \}, \\
b_0 &= \sup\{b':\, \mbox{(\ref{eq:eb5}) holds for almost all } x\in [-\delta,b'] \}.
\end{align*}
Then $a_0\le -\delta$,
$b_0\ge \delta$ and
(\ref{eq:eb5}) holds for almost all
 $ x\in [a_0,b_0]$.
We conclude that $a_0=-\infty$ and $b_0=\infty$.

Assume that $b_0<\infty$.
Take some $t_1\in E \cap [b_0-\delta/2,b_0-\delta/4]$. We have
\begin{equation}\label{eq:eb6}
  f(x) = \lambda(t_1)\overline{g(-x+2t_1)},
  \quad \mbox{ a.e. } x\in [t_1-B, t_1+B].
\end{equation}
When $x\in [b_0-\delta/2, b_0+\delta/2]
\subset [t_1-B,t_1+B]$,
we have $-x+2t_1\in [b_0-3\delta/2,b_0 ]
\subset [a_0,b_0]$.
Since (\ref{eq:eb5}) holds for almost all
 $ x\in [a_0,b_0]$, we obtain from (\ref{eq:eb6})
 that
\begin{align*}
f(x)
 &= \lambda(t_1)  \overline{g(-x+2t_1)}
  = \lambda(t_1)\mu \bar c_0 e^{-iat_1} e^{iax/2},
   \quad \mbox{ a.e. } x\in [b_0-\delta/2, b_0+\delta/2].
\end{align*}
Recall that $f(x)=c_0 e^{iax/2}$ a.e. on $[b_0-\delta/2,b_0]$.
We have
$
  \lambda(t_1)\mu \bar c_0 e^{-iat_1} = c_0$.
Hence
\[
   f(x) =  c_0 e^{iax/2},
   \quad \mbox{ a.e. } x\in [b_0, b_0+\delta/2].
\]

On the other hand, we see from (\ref{eq:eb6}) that
\[
    g(x) = \lambda(t_1)\overline{f(-x+2t_1)},
  \quad \mbox{ a.e. } x\in [t_1-B, t_1+B].
\]
Hence
\begin{align*}
g(x)&= \lambda(t_1)\bar c_0e^{-iat_1} e^{iax/2}
=\bar \mu f(x),
\quad \mbox{ a.e. } x\in [b_0,b_0+\delta/2].
\end{align*}
Now we obtain that (\ref{eq:eb5}) holds for almost all $x\in [b_0,b_0+\delta/2]$,
which contradicts  the choice of $b_0$.
Similarly we obtain $a_0=-\infty$.
\end{proof}

Now we are ready to state the main result
for STFT phase retrieval from
semi-discrete samples.

\begin{theorem}\label{thm1}
Let the window functions $\phi,\psi$ and the sampling sequence $\{\omega_{n}\}_{n\ge 1} $ be defined as in Lemma \ref{lem1}.
Let $f\in L_{loc}^1(\mathbb{R})$ be a function which is $2B$-nonseparable. Then $f$ is determined up to a global phase by the measurements $$\{|\mathcal{V}_{\phi} f(t,\omega_{n})|,~|\mathcal{V}_{\psi} f(t,\omega_{n})|:t \in \mathbb{R},~n \ge 1\}.$$
\end{theorem}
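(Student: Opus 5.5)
Suppose $g\in L^1_{loc}(\mathbb R)$ has the same measurements as $f$. The goal is to show $f\sim g$ on $\mathbb R$. By Lemma~\ref{Lm:L0}, both $f\phi(\cdot-t)$ and $g\phi(\cdot-t)$ are integrable for almost every $t$, so Lemma~\ref{lem1} applies at each such $t$. Hence for almost every $t$ there is $\lambda(t)\in\mathbb T$ with one of two alternatives on $I_t:=[t-B,t+B]$: either (a) $f=\lambda(t)g$ a.e. on $I_t$, or (b) $f=\lambda(t)\overline{g(2t-\cdot)}$ a.e. on $I_t$. Let $A$ be the set of $t$ where (a) holds and $R$ the set where (b) holds.

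If (b) holds for almost every $t$, Lemma~\ref{Lm:L3} gives $f=\mu g$ a.e. directly. In general, note that if both alternatives hold at $t$ then (a) already holds, so we may take $A\cup R$ to be of full measure. The plan is to glue the local phases along $A$. Nonseparability enters here: since $f$ is $2B$-nonseparable, $f$ does not vanish a.e. on any interval of length $2B$. If $t,s\in A$ with $|t-s|<2B$ and $f\ne0$ on a set of positive measure in $I_t\cap I_s$, then $\lambda(t)=\lambda(s)$.

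The main obstacle is the mixed case, where $A$ and $R$ both have positive measure, possibly interleaved, and the overlap $I_t\cap I_s$ may miss the support of $f$. I would handle it in three steps.

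\begin{enumerate}
\item Using Fubini and the density-point argument from Lemma~\ref{Lm:L3}, show that near a density point of $R$, (b) holds for almost every $t$ in a small interval $J$.
\item Rerun steps (i)--(ii) of the proof of Lemma~\ref{Lm:L3}. This gives $f=\mu g=c_0e^{iax/2}$ on an interval around $J$ of length close to $2B$, and, when $c_0\neq0$, nonvanishing there.
\item On such a region, $\overline{g(2t-x)}$ is itself a unimodular-constant multiple of $g(x)$, because $g$ is an exponential there. So (b) at $t$ implies (a) at $t$ with some phase. Hence, modulo null sets, we may assume (a) holds at almost every $t$, with phases that can be compared wherever $f\ne0$ on the overlap.
\end{enumerate}

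In step (ii) the case $c_0=0$ must be excluded. It is ruled out because it would make $f$ vanish on an interval of length about $2B$, contradicting $2B$-nonseparability; this is where the exact threshold $2B$ matters.

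With (a) at almost every $t$, it remains to show $\lambda$ is a.e. constant. For this I would use a sup/inf continuation argument as in part (iii) of Lemma~\ref{Lm:L3}. Let $[a_0,b_0]$ be the maximal interval on which $f=\lambda_* g$ for a fixed $\lambda_*$. If $b_0<\infty$, choose $t\in A$ slightly less than $b_0$ so that $I_t\cap[a_0,b_0]$ has length close to $2B$. Nonseparability then forces $f\ne0$ on a positive-measure part of that overlap, so $\lambda(t)=\lambda_*$, and $I_t$ extends the interval past $b_0$. This is a contradiction, so $b_0=\infty$; symmetrically $a_0=-\infty$. Therefore $f=\lambda_*g$ a.e. on $\mathbb R$.

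The delicate point in this last step is that the overlap has length just under $2B$. The choice of $t$ and the use of nonseparability on a slightly shifted length-$2B$ window need care: I would take $t$ so that the overlap contains a full length-$2B$ interval up to an $\varepsilon$, and then let $\varepsilon\to0$.
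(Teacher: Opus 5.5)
\textbf{There is a genuine gap in how you handle the mixed case.} Your final continuation step is essentially the paper's case (i). The route you take to reach it, reducing the mixed case to ``(a) at almost every $t$'', does not go through as written.

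\emph{Step 1 is not justified.} A Lebesgue density point $t^*$ of $R$ only tells you that $R$ has relative measure close to $1$ near $t^*$. It does not give (b) for almost every $t$ in an interval $J$: a priori the complement of $R$ may have positive measure in every neighbourhood of $t^*$. The Fubini argument of Lemma~\ref{Lm:L3} (that $s+t\in E_0$ for a.e.\ $(s,t)$) uses that $E_0$ has full measure in an interval, so it cannot be run on $R$. You also never show that $R$ is measurable.

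\emph{Step 2 overstates what Lemma~\ref{Lm:L3} gives.} Run locally, steps (i)--(ii) yield $|f|=|g|=C_0$ only on an interval of length $<2B$, and the exponential form only on $[-\delta,\delta]$ with $\delta<B/32$. Vanishing of $f$ on an interval of length ``about $2B$'' does not contradict $2B$-nonseparability. So $C_0=0$ is not excluded without an additional step-(iii)-type reflection argument confined to $t\in J$.

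\emph{Step 3 needs more than you have.} It requires the exponential form on all of $I_t$.

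\emph{A slip in the last step.} To make $I_t\cap[a_0,b_0]$ have length near $2B$, $t$ must be slightly larger than $b_0-B$, not slightly less than $b_0$.

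\textbf{The missing idea is that the continuation works whichever alternative holds at the new point.} This makes the mixed-case analysis unnecessary. Suppose $f=\lambda_0 g$ a.e.\ on $[a_0,b_0]\supset[t_0-B,t_0+B]$. Choose $\varepsilon_0$ with $\int_{b_0-2B+\varepsilon_0}^{b_0}|f|>0$, and take $t_1\in E\cap[b_0-B+3\varepsilon_0/4,\,b_0-B+\varepsilon_0]$.

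If (a) holds at $t_1$, nonseparability gives $\lambda(t_1)=\lambda_0$, and $I_{t_1}$ extends the interval past $b_0$.

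If (b) holds at $t_1$, take $y\in[b_0,b_0+\varepsilon_0/2]\subset I_{t_1}$. Its reflection $2t_1-y$ lies in $[b_0-2B+\varepsilon_0,b_0]$, where $f=\lambda_0 g$. Applying (b) at both $y$ and $2t_1-y$ gives
\[
 f(y)=\lambda(t_1)\overline{g(2t_1-y)}=\lambda(t_1)\lambda_0\overline{f(2t_1-y)}=\lambda(t_1)\lambda_0\overline{\lambda(t_1)}\,g(y)=\lambda_0 g(y).
\]
No nonvanishing of $f$ is needed here.

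Hence (a) at a single $t_0\in E$ already forces $f=\lambda(t_0)g$ on all of $\mathbb R$. The only remaining case is that (b) holds at every $t\in E$, which is Lemma~\ref{Lm:L3}. This is how the paper argues, and there is no mixed case left to analyze.
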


\begin{proof}
Suppose that a function $g\in L_{loc}^1(\mathbb{R})$ satisfies
\begin{align*}
\left\{\begin{array}{l}
|\mathcal{V}_{\phi} f(t,\omega_{n})|=|\mathcal{V}_{\phi} g(t,\omega_{n})|, \\
|\mathcal{V}_{\psi} f(t,\omega_{n})|=|\mathcal{V}_{\psi} g(t,\omega_{n})|,
\end{array}\right.
 \quad  \mbox{a.e. } t \in \mathbb{R},\,\,  \forall n \ge 1.
\end{align*}
By Lemma \ref{lem1}, there is some $E\subset \mathbb R$
such that $|\mathbb R\setminus E|=0$ and
for any $t \in E$, there exists some $\lambda(t)\in \mathbb{T}$ such that
\begin{align}\label{equ5}
f(x)=\lambda(t)g(x),\quad \mbox{ a.e.    } x\in [t-B,t+B]
\end{align}
or
\begin{align}\label{equ5-1}
f(x)=\lambda(t)\overline{g(-x+2t)},\quad \mbox{ a.e.  }x\in [t-B,t+B].
\end{align}

There are two cases.

(i)\,
 (\ref{equ5}) holds for some $t=t_0$.

In this case,
\begin{equation}\label{eq:ea9}
f(x)=\lambda(t_0)g(x),\quad \mbox{ a.e.    }
x\in [ t_0-B, t_0+B ].
\end{equation}
Let
\begin{align*}
  a_0 &= \inf\{a'\le t_0-B:  f(x) = \lambda(t_0) g(x),
  \mbox{ a.e. on } [a', t_0+B] \},
\\
  b_0 &= \sup\{b'\ge t_0+B:  f(x) = \lambda(t_0) g(x),
  \mbox{ a.e. on } [t_0-B, b'] \}.
\end{align*}
Then $a_0\le t_0-B$,
$b_0\ge t_0+B$, and
\begin{equation}\label{eq:eb11}
  f(x) = \lambda(t_0) g(x),
  \mbox{ a.e. on } [a_0, b_0] .
\end{equation}
We conclude that $a_0=-\infty$
and $b_0=\infty$.

Assume on the contrary that  $b_0<\infty$. Since $f$ is $2B$-nonseparable, we have
\[
  \int_{b_0-2B}^{b_0} |f(x)| \rmd x >0.
\]
By the monotone  convergence theorem, we obtain
that
\[
  \lim_{\varepsilon\to 0}
  \int_{b_0-2B+\varepsilon}^{b_0} |f(x)| \rmd x
  =\int_{b_0-2B}^{b_0} |f(x)| \rmd x >0.
\]
Hence there is some $0<\varepsilon_0<B $ such that
\begin{equation}\label{eq:ea7}
  \int_{b_0-2B+\varepsilon_0}^{b_0} |f(x)| \rmd x >0.
\end{equation}
We conclude that
 on $[b_0,b_0+\varepsilon_0/2]$,
\begin{equation}\label{eq:ea8}
f(x)=\lambda(t_0)g(x),\quad a.e.
\end{equation}
To see this, take some $t_1 \in E\cap [b_0-B+3\varepsilon_0/4, b_0-B+\varepsilon_0] $.
If (\ref{equ5}) holds for $t=t_1$, then
\begin{equation}\label{eq:eb12}
  f(x) = \lambda(t_1)
     g(x),\quad  \mbox{ a.e. on } [t_1-B,t_1+B].
\end{equation}
Now it follows from (\ref{eq:eb11}) that
\[
  f(x) = \lambda(t_0) g(x) = \lambda(t_1)
     g(x),\quad  \mbox{ a.e. on } [b_0-2B+\varepsilon_0, b_0].
\]
Since $f\ne 0$ on a subset of $[b_0-2B+\varepsilon_0, b_0]$ with positive measure,
thanks to (\ref{eq:ea7}),
we have $\lambda(t_0) = \lambda(t_1)$.
Now we see from (\ref{eq:eb12}) that
(\ref{eq:ea8}) holds on $[b_0,b_0+\varepsilon_0/2]$.

If (\ref{equ5-1}) holds for $t=t_1$,
then
\[
  f(x)=\lambda(t_0)g(x)
  = \lambda(t_1) \overline{g(-x+2t_1)},\quad  \mbox{ a.e. on } [b_0-2B+\varepsilon_0, b_0].
\]
Hence
\begin{equation}\label{eq:eb13}
  g(x)
  = \overline{\lambda(t_0)}\lambda(t_1) \overline{g(-x+2t_1)},\quad  \mbox{ a.e. on } [b_0-2B+\varepsilon_0, b_0].
\end{equation}
Note that for $y\in [b_0,b_0+\varepsilon_0/2]$,
$-y+2t_1 \in [b_0-2B+\varepsilon_0, b_0]$.
By (\ref{equ5-1}) and (\ref{eq:eb13}),
for almost all $y\in [b_0,b_0+\varepsilon_0/2]$,
\begin{align*}
f(y)
  &= \lambda(t_1) \overline{g(-y+2t_1)}
   = \lambda(t_1) \lambda(t_0) \overline{\lambda(t_1)}
      g(y)
   =    \lambda(t_0)      g(y).
\end{align*}
Hence
(\ref{eq:ea8}) also holds on $[b_0,b_0+\varepsilon_0/2]$.
In both cases, (\ref{eq:ea8}) holds on $[b_0,b_0+\varepsilon_0/2]$, which contradicts the
choice of $b_0$.

Similar arguments yield  that $a_0=-\infty$. Hence
\[
  f(x) = \lambda(t_0) g(x), \quad a.e.
\]

(ii) (\ref{equ5-1}) holds for all $t\in E$.

In this case, we get the conclusion by Lemma~\ref{Lm:L3}.
\end{proof}

\begin{remark}
The hypothesis that $f$ is $2B$-nonseparable
is necessary in Theorem~\ref{thm1}.
\end{remark}

For example, suppose that
 $f(x) = 0$ on $[t_0-B, t_0+B]$
for some $t_0$.
Let
\[
  g = f\cdot  1_{(-\infty, t_0-B)}
     + \lambda f\cdot  1_{(t_0+B,\infty)},
\]
where $\lambda\in\mathbb T\setminus\{1\}$ is a constant.
Then we have
\[
  |\mathcal V_{\phi}f(t,\omega)|
 = |\mathcal V_{\phi}g(t,\omega)|,
 \quad \mbox{ a.e. } (t,\omega)\in \mathbb R^2,
\]
while $f\not\sim g$.

\section{Phase retrieval from discrete samples}

In this section, we study STFT phase retrieval from
discrete samples.
We consider three types of
signals which include periodic,
quasi-periodic, and locally integrable signals.
For all types of signals we show
that STFT phase retrieval holds for nonseparable signals.

\subsection{Periodic signals}
First, we consider period signals.
Given a positive number $T$, denote by
$L^1(T)$ the function space
consisting of all locally integrable functions
which are periodic with a period $T$.

\begin{theorem}\label{thm3a}
Let the window functions $\phi,\psi$ and the sampling sequence $\{\omega_{n}\}_{n\ge 1}\subset\mathbb{R}$ be defined as in Lemma~\ref{lem1}.
Suppose that $0<T\le 2B$ and $f\in L^1(T)$.
Let $t_0, t_1\in  \mathbb{R}$ be such that
both $f\phi(\cdot-t_0)$ and
$f\phi(\cdot-t_1)$  are integrable.
If $(t_1-t_0)/T$ is an irrational number,
then $f$ is determined up to a global phase by the measurements $$\{|\mathcal{V}_{\phi} f(t_{j},\omega_{n})|,|\mathcal{V}_{\psi} f(t_{j},\omega_{n})|:n \ge 1,~j={0,1}\}.$$

Moreover, the conclusion fails if
$(t_1-t_0)/T$ is a rational number.
\end{theorem}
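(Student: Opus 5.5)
The plan is to take an arbitrary $g\in L^1(T)$ with the same measurements, use Lemma~\ref{lem1} at $t_0$ and at $t_1$, and exploit that each window interval $[t_j-B,t_j+B]$ has length $2B\ge T$, so it contains a full period.

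\textbf{Step 1 (local relations).} We may assume $g\phi(\cdot-t_j)$ is integrable (else replace $g$ by an a.e.\ equal function; the hypothesis on $t_0,t_1$ is really about $f$, and I would check that the measurements force it for $g$, or simply assume it). Lemma~\ref{lem1} then gives, for each $j=0,1$, some $\lambda_j\in\mathbb T$ with either $f=\lambda_j g$ a.e.\ on $[t_j-B,t_j+B]$, or $f(x)=\lambda_j\overline{g(-x+2t_j)}$ a.e.\ on $[t_j-B,t_j+B]$.

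\textbf{Step 2 (globalize by periodicity).} In either alternative both sides are $T$-periodic in $x$; for the reflected one, $x\mapsto \overline{g(2t_j-x)}$ is also $T$-periodic. An a.e.\ identity between $T$-periodic functions on an interval of length $\ge T$ therefore holds a.e.\ on $\mathbb R$. So if the first alternative occurs for some $j$, then $f\sim g$ and we are done.

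\textbf{Step 3 (both reflections; the main step).} Otherwise, for a.e.\ $x\in\mathbb R$,
\[
\lambda_0\overline{g(2t_0-x)}=\lambda_1\overline{g(2t_1-x)} .
\]
Writing $d=t_1-t_0$, this gives $g(y)=\mu\, g(y+2d)$ a.e.\ for some $\mu\in\mathbb T$. I would pass to Fourier coefficients $c_n=\frac1T\int_0^T g(x)e^{-2\pi i nx/T}\,\rmd x$. The relation gives
\[
c_n\bigl(1-\mu e^{4\pi i n d/T}\bigr)=0\quad\mbox{for all } n .
\]
Since $d/T$ is irrational, $e^{4\pi i n d/T}$ takes distinct values for distinct $n$, so $1-\mu e^{4\pi i nd/T}=0$ for at most one $n=n_0$. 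Uniqueness of Fourier coefficients (Proposition~\ref{prop:p3}, rescaled to period $T$) then yields $g=c\,e^{2\pi i n_0x/T}$ a.e. Substituting into $f(x)=\lambda_0\overline{g(2t_0-x)}$ gives $f=\lambda_0\bar c\, e^{-4\pi i n_0 t_0/T}e^{2\pi i n_0 x/T}$, a unimodular multiple of $g$ (when $c\ne0$; $c=0$ is trivial). Hence $f\sim g$. I expect this step to be the main one; the Fourier-coefficient argument avoids any density or Lemma~\ref{Lm:L1}-type reasoning.

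\textbf{Step 4 (sharpness for rational ratios).} If $t_1-t_0=pT/q$, I would choose $f$ periodic with period $T/q$, which divides both $T$ and $2(t_1-t_0)$, such that $f\not\sim \overline{f(2t_0-\cdot)}$. For instance take $f(x)=1+i\,e^{2\pi i qx/T}+2e^{4\pi i qx/T}$ and check non-equivalence via its coefficients. Set $g(x)=\overline{f(2t_0-x)}$. The reflection case of Lemma~\ref{lem1}(i) gives equal measurements at $t_0$. At $t_1$ we have $\overline{g(2t_1-x)}=f(x-2(t_1-t_0))=f(x)$ by the $T/q$-periodicity, so the measurements agree there too. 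Yet $f\not\sim g$, which shows the conclusion fails.
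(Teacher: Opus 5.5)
Your uniqueness argument (Steps 1--3) is correct and is essentially the paper's proof. It applies Lemma~\ref{lem1} at $t_0,t_1$, globalizes using $2B\ge T$, and reduces to the doubly reflected case. There, the relation $c_n\bigl(1-\mu e^{4\pi i nd/T}\bigr)=0$ together with irrationality leaves a single frequency. The paper uses the Fourier coefficients of $f$ rather than $g$, which is cosmetic. Your parenthetical in Step 1 about replacing $g$ by an a.e.\ equal function accomplishes nothing, since integrability is insensitive to null sets. The right stance, which the paper also adopts tacitly, is that admissible competitors $g$ are those for which the measurements are defined.

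The gap is in Step 4. Your strategy is the same as the paper's Example~\ref{ex:ex2}: take $f$ with period $T/q$ and set $g=\overline{f(2t_0-\cdot)}$. However, your concrete $f$ does not always satisfy $f\not\sim g$. Put $\theta=e^{-4\pi i q t_0/T}$. Then
\[
g(x)=1-i\theta\, e^{2\pi i qx/T}+2\theta^2 e^{4\pi i qx/T}.
\]
Comparing coefficients, $f=\lambda g$ forces $\lambda=1$ and $\theta=-1$, and conversely $\theta=-1$ gives $g=f$ exactly. So whenever $4qt_0/T$ is an odd integer (e.g.\ $t_0=T/(4q)$), your pair is not a counterexample, and the coefficient check you postpone would fail. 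The sharpness claim concerns arbitrary $t_0,t_1$ with rational ratio, so the coefficients must be adapted to $t_0$. For instance, $f=1+\alpha e^{2\pi i qx/T}$ yields $f\sim g$ if and only if $(\alpha/|\alpha|)^2=\theta$. Choosing $\alpha$ with $(\alpha/|\alpha|)^2\ne\theta$ therefore works, and this is exactly the paper's condition $\bar c_0/c_0\ne(\bar c_q/c_q)e^{-4\pi i q t_0/T}$.
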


\begin{proof}
First, we assume
that  $(t_1-t_0)/T$ is an irrational number.
Suppose that two functions $f, g\in L^1(T)$ satisfy
\begin{align}
\left\{\begin{array}{l}
|\mathcal{V}_{\phi} f(t_{j},\omega_{n})|=|\mathcal{V}_{\phi} g(t_{j},\omega_{n})|, \\
|\mathcal{V}_{\psi} f(t_{j},\omega_{n})|=|\mathcal{V}_{\psi} g(t_{j},\omega_{n})|,
\end{array}\right.
 \quad \forall n \ge 1,~j={0,1}.
 \label{eq:eb9}
\end{align}
By Lemma \ref{lem1}, for $j={0,1}$,
there exists some $\lambda_j  \in \mathbb{T}$ such that
\[
  f(x) = \lambda_j g(x),\quad \mbox{ a.e. } x\in [t_j-B, t_j+B]
\]
or
\[
  f(x) = \lambda_j \overline{g(-x+2t_j)},\quad \mbox{ a.e. } x\in [t_j-B, t_j+B].
\]
Since both functions  have the period $T$,
we obtain that
\begin{align*}
f=\lambda_j  g~\text{or} ~f=\lambda_j  \overline{g(2t_j-\cdot)}.
\end{align*}
It suffices to consider the case
\[
 f=\lambda_0  \overline{g(2t_0-\cdot)}
 = \lambda_1  \overline{g(2t_1-\cdot)}.
\]
Denote $a= t_1-t_0$. We have
\begin{align*}
f(x)&=\lambda_0 \overline{g(2t_{0}-x)}=\lambda_0 \overline{\lambda_1 }
f(x+2t_1-2t_0)\nonumber\\
&=\lambda_0 \bar\lambda_1
f(x+2a),\quad a.e.
\end{align*}

Recall that $f$ has the period $T$. The $k$-th Fourier coefficient of $f$ is
\begin{align*}
\hat f(k) &= \int_0^T f(x) e^{-2\pi i kx/T} \rmd x \\
&= \lambda_0 \bar\lambda_1 \int_0^T f(x+2a) e^{-2\pi i kx/T} \rmd x \\
&= \lambda_0 \bar\lambda_1
e^{4\pi i ka/T}
\int_{2a}^{2a+T} f(x) e^{-2\pi i kx/T} \rmd x \\
&= \lambda_0 \bar\lambda_1
e^{4\pi i ka/T} \hat f(k),\quad \forall k\in\mathbb Z.
\end{align*}
Hence
\begin{equation}\label{eq:eb8}
    \hat f(k)
    (\lambda_0 \bar\lambda_1
e^{4\pi i ka/T}-1) = 0,\quad \forall k\in\mathbb Z.
\end{equation}
If there are two integers $k, k'$ satisfying
\[
  e^{4\pi i ka/T} = e^{4\pi i k'a/T},
\]
then $4\pi  (k-k')a/T = 2l\pi$ for some integer $l$.
Hence  $a/T$ is a  rational number,
which contradicts the hypothesis.
Consequently,
there is
only one integer $k_0$ such that
\[
  \lambda_0 \bar\lambda_1
e^{4\pi i k_0a/T}-1=0.
\]
Therefore, $\hat f(k)=0$ for all $k\ne k_0$. It follows that
\[
  f(x) = C e^{ 2\pi i k_0x/T},\quad a.e.
\]
Hence
\[
  g(x) = \lambda_0 \overline{f(2t_0-x)}
    = \lambda_0 \bar C e^{ -4\pi i k_0t_0/T}
    e^{ 2\pi i k_0x/T},\quad a.e.
\]
Consequently,
we also have $f = \mu g$ for some $\mu\in\mathbb T$.

Next we assume that
$a/T$ is a rational number.
Suppose that $2a/T=p/q$, where $p$ and $q$ are coprime positive integers.
We see from (\ref{eq:eb8})
that $\hat f(k)$ might be nonzero whenever
$\lambda_0\bar \lambda_1 e^{2\pi i k p/q}=1$.
As a result, it is possible that $f/g$ is not a constant.
For an explicit example, see Example~\ref{ex:ex2}.
This completes the proof.
\end{proof}

\begin{example}\label{ex:ex2}
If $(t_1-t_0)/T$ is a rational number,
then there exist functions
$f,g\in L^1(T)$ which meet (\ref{eq:eb9})
while $f\not\sim g$.
\end{example}

\begin{proof}
Suppose that $2(t_1-t_0) = pT/q$, where $p, q$
are integers, $q>0$.
Let
\begin{align*}
f(x) &= c_0+c_q e^{2\pi i qx/T},
\\
g(x) &= \bar c_0+\bar c_q e^{-4\pi i q t_0/T} e^{2\pi i qx/T},
\end{align*}
where $c_0, c_q\ne 0$.
We have
\[
g(x)  = \bar c_0+\bar c_q e^{-4\pi i q t_1/T} e^{2\pi i qx/T} .
\]
Hence
\[
  f(x) = \overline{g(2t_0-x)}
  =\overline{g(2t_1-x)}.
\]
Therefore, $f$ and $g$ meet (\ref{eq:eb9}).
However, if
\[
  \frac{\bar c_0 }{c_0}
  \ne
  \frac{\bar c_q }{c_q}e^{-4\pi i q t_0/T},
\]
then there is no constant $\lambda$ satisfying
\[
  f=\lambda g.
\]
\end{proof}

\subsection{Quasi-periodic signals}
Next we consider quasi-periodic signals.
Given a positive number $T$,
we call a function $f$
quasi-periodic with a period $T$
if  there is some $\mu\in\mathbb T$ such that
\[
  f(x)=\mu f(x+T), \mbox{ a.e. on } \mathbb{R}.
\]
Denote by $\tilde L_{\mu}(T)$
the space consisting of all functions
satisfying the above equation.
Let
\begin{align*}
\tilde L (T)&= \bigcup_{\mu\in\mathbb T}\tilde L_{\mu}(T).
\end{align*}

Suppose that $\mu=e^{i\theta}$. For any $f\in \tilde L_{\mu}(T)$,
$e^{i\theta x/T} f(x)$ is a periodic function with a period $T$.

\begin{lemma}\label{Lm:La0}
Suppose $0<T<2B$.
Let $f,g\in \tilde L(T)$ be such that
either
\begin{equation}\label{eq:eb15}
  f(x)=\lambda_0  g(x),\quad \mbox{ a.e. } x\in[t_{0}-B,t_{0}+B]
\end{equation}
 or
\begin{equation}\label{eq:eb16}
 f(x)=\lambda_0  \overline{g(-x+2t_{0})},\quad \mbox{ a.e. } x\in[t_{0}-B,t_{0}+B].
\end{equation}
Then $f, g\in \tilde L_{\mu}(T)$ for some $\mu\in \mathbb{T}$
and
 (\ref{eq:eb15}) or (\ref{eq:eb16}) holds almost everywhere on $\mathbb{R}$
 provided   one of the following conditions holds,

\begin{enumerate}
\item \label{La0:a} $0<T\le B$;

\item \label{La0:b} $B<T<2B$ and neither $f$ nor $g$ is $(2B-T)$-separable.
\end{enumerate}
\end{lemma}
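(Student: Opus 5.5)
The plan is to reduce case (\ref{eq:eb16}) to case (\ref{eq:eb15}) and then handle (\ref{eq:eb15}) by comparing the two quasi-periodicity constants on the overlap of $I:=[t_0-B,t_0+B]$ with its translate $I-T$. Write $f\in\tilde L_{\mu_1}(T)$ and $g\in\tilde L_{\mu_2}(T)$.

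\emph{Reduction.} Set $G(x)=\overline{g(-x+2t_0)}$. From $g(y)=\mu_2 g(y+T)$ we get $g(y-T)=\mu_2 g(y)$. Hence
\[
G(x+T)=\overline{g(2t_0-x-T)}=\overline{\mu_2}\,G(x),
\]
that is, $G(x)=\mu_2G(x+T)$ a.e. So $G\in\tilde L_{\mu_2}(T)$, and $G$ is $L$-separable if and only if $g$ is, since $G$ is a reflection of $\bar g$. Thus (\ref{eq:eb16}) says $f=\lambda_0 G$ a.e. on $I$, which is the situation of (\ref{eq:eb15}) with $g$ replaced by $G$. It therefore suffices to treat (\ref{eq:eb15}).

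\emph{Main step.} Assume $f=\lambda_0 g$ a.e. on $I$, and let $J=[t_0-B,t_0+B-T]$. Since $T<2B$, $J$ has positive length $2B-T$, and both $x$ and $x+T$ lie in $I$ for $x\in J$. For a.e. $x\in J$,
\[
\lambda_0 g(x)=f(x)=\mu_1 f(x+T)=\mu_1\lambda_0 g(x+T)=\mu_1\overline{\mu_2}\,\lambda_0 g(x),
\]
so $(\mu_1\overline{\mu_2}-1)g=0$ a.e. on $J$. The key point is that $g$ is not a.e. zero on $J$.
\begin{itemize}
\item In case (i), $T\le B$ gives $|J|=2B-T\ge T$, so $J$ contains a full period. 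If $g=0$ a.e. on $J$, quasi-periodicity forces $g=0$ a.e. on $\mathbb R$. Then $f=0$ on $I$, an interval of length $\ge T$, so $f\equiv0$ as well, and the conclusion is trivial: both functions lie in every $\tilde L_\mu(T)$ and the identity holds everywhere.
\item In case (ii), $g$ is $(2B-T)$-nonseparable, so $g$ cannot vanish a.e. on $J$.
\end{itemize}
In either case $\mu_1=\mu_2=:\mu$, so $f,g\in\tilde L_\mu(T)$.

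\emph{Extension to $\mathbb R$.} Now $f-\lambda_0 g\in\tilde L_\mu(T)$ vanishes a.e. on $I$, an interval of length $2B>T$. Iterating $h(x)=\mu h(x+T)$ in both directions shows $f-\lambda_0g=0$ a.e. on $\mathbb R$. In the reduced case this gives $f=\lambda_0 G$ on $\mathbb R$, which is (\ref{eq:eb16}) globally, together with $f,g\in\tilde L_{\mu_2}(T)$.

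The only delicate step is guaranteeing that $g$ (or $G$) is nonzero on a positive-measure subset of the overlap $J$. This is exactly where the dichotomy between $T\le B$ (the overlap covers a full period) and the nonseparability hypothesis when $B<T<2B$ enters.
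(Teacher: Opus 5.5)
Your proof is correct and follows essentially the same route as the paper: compare the quasi-periodicity relations on the overlap of $[t_0-B,t_0+B]$ with its $T$-translate, use nonseparability of $g$ (or, when $T\le B$, the fact that the overlap contains a full period) to match the multipliers, and then propagate the identity to $\mathbb R$ by iterating $h(x)=\mu h(x+T)$. The only differences are organizational. You reduce the reflected case to the unreflected one via $G(x)=\overline{g(2t_0-x)}$ instead of repeating the computation, and you handle the degenerate case $g\equiv 0$ explicitly, which the paper passes over silently.
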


\begin{proof}
Assume that $f\in \tilde L_{\mu}$ for some  $\mu \in \mathbb{T}$.

First we suppose that (\ref{eq:eb15}) holds.
Then for Case (\ref{La0:a}), we have
$$g(x)=\overline{\lambda_0 } f(x)=\overline{\lambda_0} \mu f(x+T)=\mu g(x+T), \quad \mbox{ a.e. } x \in[t_{0}-B,t_{0}-B+T].$$
Hence $g\in \tilde L_{\mu}(T)$.

For Case (\ref{La0:b}), we have
$$g(x)=\overline{\lambda_0 } f(x)=\overline{\lambda_0} \mu f(x+T)=\mu g(x+T), \quad \mbox{ a.e. } x \in[t_{0}-B,t_{0}+B-T].$$
Since $g$ is $(2B-T)$-nonseparable, we have $g\in \tilde L_{\mu}(T)$.

It follows from (\ref{eq:eb15}) that
for any $l\in\mathbb Z$
and
  almost all $x\in [t_{0}-B+lT,t_{0}-B+(l+1)T]$,
\begin{align*}
f(x)&=\overline{\mu}^{l} f(x-lT)
 =\lambda_0 \overline{\mu}^{l}g(x-lT)
=\lambda_0 g(x).
\end{align*}
Hence
\[
f(x)=\lambda_0 g(x),
\qquad \mbox{ a.e.  on } \mathbb R.
\]

Next we suppose that (\ref{eq:eb16}) holds.
For Case (\ref{La0:a}), we have
\begin{align*}
g(x) & =\lambda_0  \overline{f(-x+2 t_{0})}=\lambda_0 \mu \overline{f(-x-T+2 t_{0})}
  =\mu g(x+T),   \mbox{ a.e. }~ x \in [t_{0}-B,t_{0}-B+T].
\end{align*}
Hence $g\in \tilde L_{\mu}(T)$.

For Case (\ref{La0:b}), we have
\begin{align*}
g(x) & =\lambda_0  \overline{f(-x+2 t_{0})}=\lambda_0 \mu \overline{f(-x-T+2 t_{0})}
  =\mu g(x+T),   \mbox{ a.e. }~ x \in [t_{0}-B,t_{0}+B-T].
\end{align*}
Again, we obtain $g\in \tilde L_{\mu}(T)$.
It follows from (\ref{eq:eb16}) that
for any $l\in\mathbb Z$
and
  almost all $x\in [t_{0}-B+lT,t_{0}-B+(l+1)T]$,
\begin{align*}
f(x)&=\overline{\mu}^{l} f(x-lT)
 =\lambda_0 \overline{\mu^{l}g(-x+lT+2t_0)}
=\lambda_0  \overline{g(-x +2t_0)}.
\end{align*}
Hence $f(x) =\lambda_0  \overline{g(-x +2t_0)}$ a.e. on $\mathbb R$.
\end{proof}

With Lemma~\ref{Lm:La0}, we   get a uniqueness result
for STFT phase retrieval of quasi-periodic signals.

\begin{theorem}\label{thm:th3}
Let the window functions $\phi,\psi$ and the sampling sequence $\{\omega_{n}\}_{n\ge 1} $ be defined as in Lemma \ref{lem1} and $0<T < 2B$.
Suppose that
$f \in \tilde L (T)$
and $t_0$, $t_1$ are real numbers such that
both $f\phi(\cdot-t_0)$ and
$f\phi(\cdot-t_1)$  are integrable.
If $(t_1-t_0)/T$ is an irrational number
and $f$ is $(2B-T)$-nonseparable
when $B<T<2B$,
then $f$ is determined up to a global phase by the measurements \begin{align}\label{equ1.3-1}
\{|\mathcal{V}_{\phi} f(t_{j},\omega_{n})|,|\mathcal{V}_{\psi} f(t_{j},\omega_{n})|:n \ge 1,~j={0,1}\}.
\end{align}

Moreover, the conclusion fails if
$(t_1-t_0)/T$ is a rational number
or
$f$ is
$(2B-T +\varepsilon)$-separable
when $B<T<2B$, where $\varepsilon>0$.
\end{theorem}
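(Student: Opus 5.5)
We follow the scheme of Theorem~\ref{thm3a}, with Lemma~\ref{Lm:La0} supplying the passage from local to global identities. Let $g\in\tilde L(T)$ produce the same measurements (\ref{equ1.3-1}) as $f$. By Lemma~\ref{lem1}, applied at $t_0$ and at $t_1$, there are $\lambda_0,\lambda_1\in\mathbb T$ such that for each $j\in\{0,1\}$ either $f=\lambda_j g$ or $f=\lambda_j\overline{g(2t_j-\cdot)}$ a.e.\ on $[t_j-B,t_j+B]$.

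We then apply Lemma~\ref{Lm:La0} to upgrade these identities to all of $\mathbb R$ and to put $f,g$ in a common $\tilde L_\mu(T)$. In case (\ref{La0:a}) ($T\le B$) this is immediate. In case (\ref{La0:b}) the lemma also needs $g$ to be $(2B-T)$-nonseparable, and this is the delicate point. Suppose $g$ vanished on an interval $J$ of length $2B-T$. Quasi-periodicity spreads the zero set to $J+T\mathbb Z$. The local identity at $t_0$ pulls it back to $f$ on $[t_0-B,t_0+B]$, directly or after the reflection $x\mapsto 2t_0-x$, which preserves interval lengths. Quasi-periodicity of $f$ then moves this zero interval anywhere. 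Since $2B>T+(2B-T)$, the window $[t_0-B,t_0+B]$ contains a full translate of $J$ (or of its reflection), so $f$ would be $(2B-T)$-separable, a contradiction.

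If either global identity is the plain one $f=\lambda_j g$, we are done. Otherwise
\[
f=\lambda_0\overline{g(2t_0-\cdot)}=\lambda_1\overline{g(2t_1-\cdot)}\quad\text{a.e.\ on }\mathbb R,
\]
which gives $f(x)=\lambda_0\bar\lambda_1 f(x+2a)$ with $a=t_1-t_0$. Write $\mu=e^{i\theta}$ and set $F(x)=e^{i\theta x/T}f(x)$, which is $T$-periodic. Then $F(x)=\lambda_0\bar\lambda_1 e^{-2i\theta a/T}F(x+2a)$. The Fourier coefficient computation of Theorem~\ref{thm3a}, with irrational $a/T$, shows that at most one coefficient $\hat F(k_0)$ is nonzero. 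Hence $f(x)=Ce^{i(2\pi k_0-\theta)x/T}$, and the reflection formula makes $g$ the same exponential up to a unimodular constant, so $f\sim g$.

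For the failure statements, rational $(t_1-t_0)/T$ is handled by Example~\ref{ex:ex2}, since periodic functions lie in $\tilde L_1(T)$. For the separable case we build an explicit counterexample. Take $f$ to be $T$-periodic, vanishing on $[t_0+B-(2B-T+\varepsilon),\,t_0+B]$ modulo $T$ (shrinking $\varepsilon$ if necessary). Define $g=f$ on one of the two pieces that the zero gaps separate inside each window, and $g=\lambda f$ with $\lambda\ne 1$ on the other, arranged quasi-periodically with multiplier $\lambda$. Then every window $[t_j-B,t_j+B]$ contains a zero gap covering the boundary between the pieces, so locally $f=\lambda' g$ with a single constant and the measurements agree, while $f\not\sim g$. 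Checking that such a choice is consistent for both $t_0$ and $t_1$ is the step I expect to need the most care; I would first try choosing the gap positions relative to $t_0$ modulo $T$ and use the slack $\varepsilon$ to accommodate $t_1$.
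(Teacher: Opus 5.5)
Your uniqueness argument is correct and follows the paper: Lemma~\ref{lem1} at $t_0,t_1$, Lemma~\ref{Lm:La0} to globalize, then the Fourier-coefficient argument for the $T$-periodic function $e^{i\theta x/T}f$. The rational case via Example~\ref{ex:ex2} is also the paper's. Your check that $g$ is $(2B-T)$-nonseparable fills a step the paper skips: Lemma~\ref{Lm:La0}(b) needs it for $g$, but the theorem assumes it only for $f$. One slip: $2B=T+(2B-T)$ is an equality, not a strict inequality. The containment of some $J+kT$ in $[t_0-B,t_0+B]$ still holds, because the admissible left endpoints form a closed interval of length exactly $T$.

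The gap is the separable counterexample. You do not construct it, and the mechanism you describe is wrong. Suppose a window $[t_j-B,t_j+B]$ contains non-null parts of two pieces carrying $g=f$ and $g=\lambda f$. Then $f=\lambda' g$ fails there for every constant $\lambda'$, and by the equivalence in Lemma~\ref{lem1} the measurements at $t_j$ will in general differ; a zero gap inside the window does not help. What you need is that each window meets only \emph{one} support piece of $f$.

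Take your zero set $[t_0-B+T-\varepsilon,\,t_0+B]+T\mathbb Z$, with $\varepsilon<2T-2B$ since otherwise $f=0$. A window $[t-B,t+B]$ meets only one piece iff it lies inside gap--piece--gap, an interval of length $2B+\varepsilon$. That holds iff $t\in[t_0-\varepsilon,t_0]+T\mathbb Z$. Shifting the gaps only shifts this interval of length $\varepsilon$. So with gaps of this length, the slack cannot absorb an arbitrary prescribed $t_1$; it works only when $t_1-t_0$ is within $\varepsilon$ of $T\mathbb Z$.

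The missing idea is that the negative statement is existential in the sampling pair, so you must choose the pair. Take $t_1-t_0=\alpha T$ with $\alpha$ irrational in $[1-\varepsilon/T,1)$. Then the $t_0$-window meets only one piece $P$, and the $t_1$-window meets only $P+T$. Setting $g(x)=\lambda g(x+T)$ with $g=f$ on $P$ gives $g=\bar\lambda f$ on $P+T$, so both local relations hold. Meanwhile $f\in\tilde L_1(T)$ and $g\in\tilde L_\lambda(T)$ with $\lambda\ne 1$ force $f\not\sim g$.

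This is exactly Example~\ref{ex:ex1a}. There $t_0=0$ and $t_1=\alpha T$ with $\alpha\in(2B/T-1,1)\setminus\mathbb Q$. The function $f$ is $T$-periodic, equal to $c\ne0$ on $(B-T,-B+\alpha T)$ and $0$ on the rest of $(-B,-B+T)$. Take $g=f$ on $(-B,-B+T)$ and $g(x)=-g(x+T)$. Then $f=g$ on $[-B,B]$ and $f=-g$ on $[\alpha T-B,\alpha T+B]$. Also $f$ vanishes on $(-B+\alpha T,B)$, an interval of length $2B-T+(1-\alpha)T$.
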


\begin{proof}
Since $f\in \tilde L(T)$, there is some
$\mu \in \mathbb{T}$ such that
\begin{align}\label{equ1.3}
f(x) = \mu f(x + T),\quad \mbox{ a.e. }
\end{align}
Suppose that a function $g\in \tilde L (T)$ satisfies
\begin{align}
\left\{\begin{array}{l}
|\mathcal{V}_{\phi} f(t_{j},\omega_{n})|=|\mathcal{V}_{\phi} g(t_{j},\omega_{n})|, \\
|\mathcal{V}_{\psi} f(t_{j},\omega_{n})|=|\mathcal{V}_{\psi} g(t_{j},\omega_{n})|,
\end{array}\right.
\quad \forall n\ge 1, j= 0,1 .
\label{eq:ed1}
\end{align}
By Lemmas~\ref{lem1} and \ref{Lm:La0},
$g\in\tilde L_{\mu}(T)$.
Moreover,
for $j=0,1$,
there exists some $\lambda_j \in \mathbb{T}$ such that
\begin{align*}
f=\lambda_j g,  \quad \mbox{ a.e. on } \mathbb R
\end{align*}
or
\begin{align*}
f=\lambda_j  \overline{g(2t_j-\cdot)},  \quad \mbox{ a.e. on } \mathbb R.
\end{align*}

It suffices to consider the case
\[
 f=\lambda_0  \overline{g(2t_0-\cdot)}
 = \lambda_1  \overline{g(2t_1-\cdot)},
 \quad \mbox{ a.e. on } \mathbb R.
\]
Denote $a= t_1-t_0$. We have
\begin{align}\label{equ1.4}
f(x)&=\lambda_0\overline{g(2t_{0}-x)}=\lambda_0\overline{\lambda_1}
f(x+2t_1-2t_{0})\nonumber\\&=\lambda_0\overline{\lambda_1}
f(x+2a),\quad \mbox{ a.e. }x\in\mathbb{R}.
\end{align}
Let $\mu=e^{i\theta_{1}}$, $\lambda_0\overline{\lambda_1}=e^{i\theta_{2}}$ and $h(x)=e^{i {\theta_{1}}x/{T}}f(x)$. From (\ref{equ1.3}) and (\ref{equ1.4}), we deduce that
\[
h(x+2a)=e^{2ia\theta_{1}/T}  \overline{\lambda_0}\lambda_1h(x)
:= e^{i\theta} h(x) ,\quad \mbox{ a.e. }
\]
and
\[
h(x+T)=h(x),\quad \mbox{ a.e. }
\]
That is,  $h$ has the period $T$.
Similarly to (\ref{eq:eb8}) we obtain
that the $k$-th Fourier coefficient of $h$
satisfies
\begin{align}\label{eq:eb14}
    \hat h(k)
    (e^{-i\theta}
e^{4\pi i ka/T}-1) = 0,\quad \forall k\in\mathbb Z.
\end{align}
If there are two integers $k, k'$ such that
\[
  e^{4\pi i ka/T} = e^{4\pi i k'a/T},
\]
then $4\pi  (k-k')a/T = 2l\pi$ for some integer $l$.
Hence  $a/T$ is a  rational number,
which contradicts the hypothesis.
Consequently,
there is
only one integer $k_0$ such that
\[
  e^{-i\theta}
e^{4\pi i k_0a/T}-1=0.
\]
Therefore, $\hat h(k)=0$ for all $k\ne k_0$. It follows that
\[
  h(x) = C e^{ 2\pi i k_0x/T},\quad a.e.
\]
Hence
\[
  f(x) =Ce^{ i(2\pi k_{0}-\theta_{1})x/T},\quad \mbox{ a.e. }
\]
And
\[
  g(x) = \lambda_0 \overline{f(2t_0-x)}
    = \lambda_0 \bar C e^{ -2i(2\pi k_{0}-\theta_{1})t_0/T}
    e^{ i(2\pi k_{0}-\theta_{1})x/T},\quad a.e.
\]
Consequently,
we also have $f = \lambda g$ for some $\lambda\in\mathbb T$.

On the other hand, note
that a period function is also quasi-periodic.
Example~\ref{ex:ex2} also works for this case.
That is,
if
$(t_1-t_0)/T$ is a rational number,
then there exist functions $f,g\in\tilde L(T)$
such that  (\ref{eq:ed1}) holds
while $f\not\sim g$.

For a counterexample
when $f$ is $(2B-T +\varepsilon)$-separable  with $B<T<2B$,
see Example~\ref{ex:ex1a} below.
This completes the proof.
\end{proof}

\begin{example} \label{ex:ex1a}
For $B<T<2B$, if $f$ is $(2B-T)$-separable, then the measurements
(\ref{equ1.3-1}) fail to determine $f$ uniquely up to a global phase.
\end{example}

\begin{proof}
Take some
$\alpha \in (2B/T-1,1)\backslash \mathbb Q$. Set $t_0=0$, $t_1=\alpha T$,
\[
f(x) =\begin{cases}
0, & x\in(-B,B-T)\cup (-B+\alpha T,-B+T),\\
c,
& x\in(B-T,-B+\alpha T),
\end{cases}
\]
where $c\ne0$ is a constant,
$f(x)=f(x+T)$,
$g(x)=f(x)$ on $(-B,-B+T)$ and
$g(x)=-g(x+T)$. See the figure below.

%
%
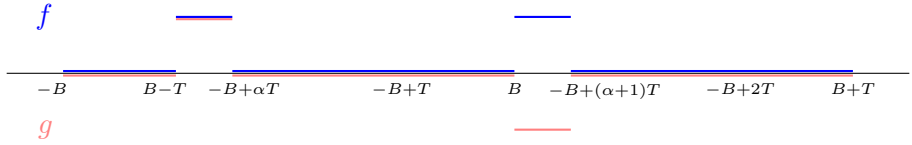
\begin{figure}[!ht]
\begin{tikzpicture}[scale=1.5]
\draw[blue,thick] (-2,0.02) -- (-1,0.02);
\draw[blue,thick] (-1,0.5) -- (-0.5,0.5);
\draw[blue,thick] (-0.5,0.02) -- (2,0.02);
\draw[blue,thick] (2,0.5) -- (2.5,0.5);
\draw[blue,thick] (2.5,0.02) -- (5,0.02);
\draw (-2.5,0)--(5.5,0);
\draw[red!50,thick] (-2,-0.02) -- (-1,-0.02);
\draw[red!50,thick] (-1,0.48) -- (-0.5,0.48);
\draw[red!50,thick] (-0.5,-0.02) -- (2,-0.02);
\draw[red!50,thick] (2,-0.5) -- (2.5,-0.5);
\draw[red!50,thick] (2.5,-0.02) -- (5,-0.02);
\coordinate [label=below:$\scriptscriptstyle -B$](A) at (-2.1,0);
\coordinate [label=below:$\scriptscriptstyle B-T$](B) at (-1.1,0);
\coordinate [label=below:$\scriptscriptstyle -B+\alpha T$]
  (C) at (-0.4,0);
\coordinate [label=below:$\scriptscriptstyle -B+T$](D) at (1,0);
\coordinate [label=below:$\scriptscriptstyle B$](E) at (2,0);
\coordinate [label=below:$\scriptscriptstyle -B+(\alpha+1)T$](F) at (2.8,0);

\coordinate [label=below:$\scriptscriptstyle -B+2T$](G) at (4,0);

\coordinate [label=below:$\scriptscriptstyle B+T$](H) at (5,0);

\coordinate [label=left:${\color{blue} f}$](I) at (-2,0.5);
\coordinate [ label=left:${\color{red!50} g}$](J) at (-2,-0.5);
\end{tikzpicture}
\caption{Two signals $f$ and $g$. }
\label{fig:f1}
\end{figure}

Then we have
$f=g=0$ on $(-B+T,B)\cup (-B+(\alpha+1)T,B+T)$.
Hence
\[
\begin{cases}
f(x) =g(x), & x\in(-B,B),\\
f(x) =-g(x),
& x\in(\alpha T-B,\alpha T+B).
\end{cases}
\]
Therefore, $f\not\sim g$.
However,
\begin{align*}
|\mathcal{V}_{\phi} f(t_j,\omega)|&=\left|\int_{-B}^{B}f(x+t_j) \overline{\phi(x)} e^{-2 \pi i x \omega} \rmd x\right|\\
&=\left|\int_{-B}^{B}g(x+ t_j) \overline{\phi(x)} e^{-2 \pi i x \omega} \rmd x\right|\\
&=|\mathcal{V}_{\phi} g(t_j,\omega)|,
\quad \forall \omega\in\mathbb R
\end{align*}
for $j=0,1$.
\end{proof}

\subsection{Locally integrable signals}

In this subsection, we study STFT phase retrieval
for locally integrable signals from discrete samples.

Theorem~\ref{thm:main} states that
when $\phi\in L^{p'}$ and $f\in L^p$,
(\ref{eq:ed2}) determines
$f$ up to a constant factor.
However, the conclusion is false if we
extend the signal class
from $L^p$ to $L_{loc}^p$.
In this case, one more
sampling point for the parameter $t$
is needed.

\begin{theorem}\label{thm1a}
Let $1\le p\le \infty$ and $\phi\in L^{p'}(\mathbb R)$
satisfy (\ref{eq:phi}).
Let the window function $\psi$ and the sampling sequence $\{\omega_{n}\}_{n\ge 1} $ be defined as in Lemma \ref{lem1}. Let $0<a\le B$ and $f\in L^{p}_{loc}(\mathbb R)$
which is  $(2B-a)$-nonseparable. Then $f$ is determined up to a global phase by the measurements
\begin{align}\label{equ1.6}
\{|\mathcal{V}_{\phi} f(ma,\omega_{n})|,~|\mathcal{V}_{\psi} f(ma,\omega_{n})|,~|\mathcal{V}_{\phi} f(t_0,\omega_{n})|,~|\mathcal{V}_{\psi} f(t_0,\omega_{n})|:n\ge 1, m \in \mathbb{Z}\},
\end{align}
where $t_0/a$ is an irrational number.

Moreover,
the conclusion fails if one of the following conditions holds,
\begin{enumerate}
\item the sampling interval $a$ is greater than $B$;
\item the signal $f$ is $(2B-a)$-separable;
\item the ratio $t_0/a$ is a rational number;
\item the measurements
$
\{|\mathcal{V}_{\phi} f(t_0,\omega_{n})|$,
$
|\mathcal{V}_{\psi} f(t_0,\omega_{n})|:\,
n \ge 1\}$
are removed.
\end{enumerate}
\end{theorem}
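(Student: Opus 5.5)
We take $g\in L^p_{loc}(\mathbb R)$ with the same measurements (\ref{equ1.6}) and apply Lemma~\ref{lem1} at every point $ma$ and at $t_0$. Since $\phi\in L^{p'}$ and $f\in L^p_{loc}$, Hölder's inequality makes $f\phi(\cdot-t)$ integrable for every $t$, so the lemma applies at each sample point and not just almost everywhere. For each $m$ we obtain $\lambda_m\in\mathbb T$ with either $f=\lambda_m g$ a.e.\ on $I_m:=[ma-B,ma+B]$ (type A) or $f=\lambda_m\overline{g(2ma-\cdot)}$ a.e.\ on $I_m$ (type B). We get the same dichotomy on $J:=[t_0-B,t_0+B]$ with constant $\lambda$. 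Since $a\le B$, consecutive intervals $I_m, I_{m+1}$ overlap on a set of length $2B-a$, and they cover $\mathbb R$. The nonseparability hypothesis says exactly that $f$ does not vanish a.e.\ on any overlap $I_m\cap I_{m+1}$.

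\textbf{Propagation step.} We show that the local relations glue together. If $m$ and $m+1$ are both of type A, comparing on the overlap where $f\ne0$ on a positive-measure set forces $\lambda_m=\lambda_{m+1}$. If $m$ is of type A and $m+1$ of type B, we repeat the trick from Theorem~\ref{thm1}, case (i). On the overlap $K=[(m+1)a-B,ma+B]$ we have $g=\bar\lambda_m f$ and $f=\lambda_{m+1}\overline{g(2(m+1)a-\cdot)}$. The reflection $x\mapsto 2(m+1)a-x$ maps $I_{m+1}$ to itself and maps the part of $I_{m+1}$ to the right of $I_m$ into $K$. Hence $f=\lambda_m g$ extends to all of $I_{m+1}$, with constant $\lambda_m$. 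By induction in both directions, once a single type A index occurs we get $f=\lambda g$ a.e.\ on $\mathbb R$.

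\textbf{All type B.} Two consecutive type B relations give $f(x)=\lambda_m\bar\lambda_{m+1}f(x+2a)$ on the overlap. Chaining along all $m$ should yield a global relation $f(x)=c_m f(x+2a)$ with constants consistent across overlaps, using nonseparability again; globally this says $f=\nu\,T_{-2a}f$ for some $\nu\in\mathbb T$. Setting $\lambda_*:=\lambda_0$, a short computation then shows $f=\lambda_*\overline{g(-\cdot)}$ a.e.\ on $\mathbb R$, so $f$ is quasi-periodic with period $2a$.

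\textbf{Using $t_0$.} If $J$ is of type A, then $f=\lambda g$ on $J$, while $f=\lambda_*\overline{g(-\cdot)}$ globally. We want to show that either this forces $f\sim g$ globally, or reduce it to the type B case by an argument analogous to Lemma~\ref{Lm:La0}. If $J$ is of type B, then $\overline{g(2t_0-x)}$ and $\overline{g(-x)}$ agree up to a constant on $J$, giving $g(y)=c\,g(y+2t_0)$ on an interval of length $2B\ge 2a$. Combined with the $2a$-quasi-periodicity, this propagates to all of $\mathbb R$. We then run the Fourier-coefficient argument of Theorem~\ref{thm:th3} for $h(x)=e^{i\theta x/(2a)}f(x)$, which has period $2a$ and satisfies $h(x+2t_0)=e^{i\theta'}h(x)$. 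Since $t_0/a\notin\mathbb Q$, only one Fourier coefficient of $h$ survives, so $f$ is a pure exponential and $f\sim g$.

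\textbf{Counterexamples.} For (ii) and (i), modify $g$ by a phase past a gap not seen by any single window; for (i) the windows leave an uncovered overlap when $a>B$. For (iii), adapt Example~\ref{ex:ex2} with period $2a$. For (iv), take $f$ quasi-periodic with period $2a$ and $g=\overline{f(-\cdot)}$ not proportional to $f$.

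The main obstacle is the all type B case with $J$ of type A. There one must show that the mixed local information is incompatible unless $f\sim g$, which needs a careful overlap argument using $(2B-a)$-nonseparability.
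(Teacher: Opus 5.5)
\textbf{Gap: the all-type-B case.} Your setup, the H\"older remark, the propagation step (this is the paper's case (i), with the same reflection trick) and the counterexamples for (ii), (iii), (iv) are fine. The problem is your claim that the constants glue into a global relation $f=\nu T_{-2a}f$.

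The relation $f(x)=\lambda_{m-1}\bar\lambda_m f(x+2a)$ holds on $I_{m-2}\cap I_{m-1}=[(m-1)a-B,(m-2)a+B]$. Two consecutive intervals of this kind overlap only in length $2B-2a$. That is below the nonseparability scale $2B-a$, and it is zero when $a=B$, so nonseparability gives you nothing there.

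For $a=B$ the claim is false. Write $u_k(s)=f(kB+s)$ and $v_k(s)=g(kB+s)$ for $s\in[0,B]$, and set $Rv(s)=v(B-s)$. Type B at $mB$ with constant $\lambda_m$ is equivalent to $u_{m-1}=\lambda_m\overline{Rv_m}$ together with $u_m=\lambda_m\overline{Rv_{m-1}}$. This system is solvable for every unimodular sequence $(\lambda_m)$: take $v_0\equiv 1$, $v_1(s)=e^{i\pi s/B}$, $v_m=\lambda_m\bar\lambda_{m-1}v_{m-2}$ and $u_m=\lambda_m\overline{Rv_{m-1}}$. With $\lambda_m=e^{im^2}$ you get $|f|=|g|=1$ and no index of type A. Yet $f(x+2B)=e^{i(2m+1)}f(x)$ on $[(m-1)B,mB]$, so $f$ is not $2B$-quasi-periodic.

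Hence "$f=\lambda_*\overline{g(-\cdot)}$ on $\mathbb R$" fails as well. Your treatment of $J$ of type B and the Fourier-coefficient argument for $h=e^{i\theta x/(2a)}f$ both rest on it. The subcase you flag as the main obstacle, $J$ of type A, is not handled at all.

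\textbf{The missing idea.} Work only with quantities in which the $\lambda$'s cancel, as the paper does.
\begin{itemize}
\item Taking moduli gives $|f(x)|=|f(x+2a)|$ a.e.\ on $\mathbb R$.
\item The same constant $\lambda_{m-1}\bar\lambda_m$ appears in the shift relations for $f$ and for $g$ on the same interval. So $f/g$ is exactly $2a$-periodic wherever $g\ne 0$, even though $f$ need not be quasi-periodic.
\end{itemize}
Fix $m_0$ with $(m_0-1)a<t_0<m_0a$, so $m_0a-t_0<a\le B$.

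\emph{$J$ of type A.} Reflection about $m_0a$ maps $[t_0+B,m_0a+B]$ into $J$. Type B at $m_0$ combined with $f=\lambda(t_0)g$ on $J$ then gives $f=\lambda(t_0)g$ on all of $I_{m_0}$. You are back in your propagation step.

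\emph{$J$ of type B.} The type-B relations at $m_0$ and $m_0-1$ give $|f(x)|=|f(x-2t_0)|$ on $J$. This extends to $\mathbb R$ by the $2a$-periodicity of $|f|$. Lemma~\ref{Lm:L1}, applied with the small periods $2(l t_0\bmod a)$, then yields $|f|=|g|\equiv c_0$, and $c_0>0$ by nonseparability. Now $h=f/g$ is unimodular and $2a$-periodic. It satisfies $h(x)=h(2ma-x)$ and $h(x)=h(2t_0-x)$ on all of $\mathbb R$; these extend from $I_m$ and $J$ because $2B\ge 2a$. Hence $h(x)=h(x-2t_0)$, and Lemma~\ref{Lm:L1} makes $h$ constant.

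\textbf{Counterexample (i).} Your sketch does not work. For $B<a<2B$, suppose $g$ is multiplied by a unimodular constant past some point and no window detects it. Then $f$ must vanish on an interval of length at least $2B-a$, which the hypotheses forbid. The paper instead uses a conjugate reflection on $(B-a,a-B)$, a region seen only by the window at $0$. Take $f=-f^*$ off this interval, $g=-f^*$ on it, and $g=f$ elsewhere (Example~\ref{ex:ex3}).
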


\begin{proof}
Suppose that a function $g\in L^{p}_{loc}(\mathbb R)$ satisfies
\begin{align}
\left\{\begin{array}{l}
|\mathcal{V}_{\phi} f(ma,\omega_{n})|=|\mathcal{V}_{\phi} g(ma,\omega_{n})|, \\
|\mathcal{V}_{\psi} f(ma,\omega_{n})|=|\mathcal{V}_{\psi} g(ma,\omega_{n})|,\\
|\mathcal{V}_{\phi} f(t_0,\omega_{n})|=|\mathcal{V}_{\phi} g(t_0,\omega_{n})|, \\
|\mathcal{V}_{\psi} f(t_0,\omega_{n})|=|\mathcal{V}_{\psi} g(t_0,\omega_{n})|,
\end{array}\right.
 \quad \forall n\ge 1,\, m \in \mathbb{Z}.
 \label{eq:ec3}
\end{align}
By Lemma \ref{lem1}, for any $m \in \mathbb{Z}$,
there exists some $\lambda_m \in \mathbb{T}$ such that
\begin{align}\label{equ1.5}
  f(x) = \lambda_m g(x),\quad \mbox{ a.e. } x\in [ma-B, ma+B]
\end{align}
or
\begin{align}\label{equ1.5-1}
  f(x) = \lambda_m \overline{g(-x+2ma)},\quad \mbox{ a.e. } x\in [ma-B, ma+B].
\end{align}

There are two cases.

(i)\,
 (\ref{equ1.5}) holds for some $m=m_0$. Without loss of generality, we may assume $m_0=0$.

In this case,
\begin{equation}\label{eq:ec1}
f(x)=\lambda_{0}g(x),\quad \mbox{ a.e. } x\in [-B,B].
\end{equation}
We prove by induction that for any $m\ge 0$.
\begin{equation}\label{eq:ec2}
  f(x)=\lambda_{0}g(x),\quad \mbox{ a.e. } x\in [-B-ma,B+ma].
\end{equation}

First, (\ref{eq:ec2}) holds for $m=0$, thanks to
(\ref{eq:ec1}).

Assume that (\ref{eq:ec2}) holds for some $m=k\ge 0$.
For $m=k+1$,  if (\ref{equ1.5}) holds, that is,
$f(x)=\lambda_{k+1}g(x)$, a.e. $x\in[(k+1)a-B,(k+1)a+B]$,
then
\begin{align*}
f(x)=\lambda_{0}g(x)=\lambda_{k+1}g(x),\quad \mbox{ a.e. } x\in [(k+1)a-B, ka+B].
\end{align*}
Since $f$ is $(2B-a)$-nonseparable, we deduce that $\lambda_{0}=\lambda_{k+1}$.
Hence
\begin{equation}\label{eq:ed6}
    f(x)=\lambda_{0}g(x),\quad \mbox{ a.e. } x\in [ka+B, (k+1)a+B].
\end{equation}

If (\ref{equ1.5-1}) holds, then
\begin{equation}\label{eq:ec4}
  f(x)=\lambda_{k+1}\overline{g(-x+2(k+1)a)},
  \quad \mbox{ a.e. } x\in [(k+1)a-B, (k+1)a+B],
\end{equation}
which is equivalent to
\begin{equation}\label{eq:ec4a}
  g(x)=\lambda_{k+1}\overline{f(-x+2(k+1)a)},
  \quad \mbox{ a.e. } x\in [(k+1)a-B, (k+1)a+B].
\end{equation}
Since $f(x) = \lambda_0 g(x)$ a.e. on
$[(k+1)a-B,ka+B]$, we have
\[
  \overline{g(-x+2(k+1)a)}
  =\lambda_0 \overline{f(-x+2(k+1)a)},
  \quad \mbox{ a.e. } x\in   [(k+1)a-B,ka+B].
\]
That is,
\[
  f(y)
  =\lambda_0 g(y),
  \quad \mbox{ a.e. } y\in   [(k+2)a-B, (k+1)a+B].
\]
Observe that $(k+2)a-B\le ka+B$.
Hence (\ref{eq:ed6}) also holds.
Similarly we get the identity
on $[-B-(k+1)a, -B-ka]$. Hence (\ref{eq:ec2}) holds for $m=k+1$. By induction, it holds for all $m\ge 0$.
That is, $f = \lambda_0 g$, a.e.

(ii) (\ref{equ1.5-1}) holds for all $m \in \mathbb{Z}$.

Then it is easy to show that
\begin{align}
f(x)&=\lambda_{m-1}\overline{g(-x+2(m-1)a)}
 =\lambda_{m-1}\overline{\lambda_{m}}f(x  +2 a),
   \nonumber \\
 &
   \qquad \mbox{ a.e. } x\in [(m-1)a-B, (m-2)a+B],~m\in\mathbb{Z}.
  \label{eq:ec5}
\end{align}
Since $\cup_{m\in\mathbb{Z}}[(m-1)a-B, (m-2)a+B]=\mathbb{R}$, we have
\begin{align}\label{equ1.19}
|f(x)|= |f(x+2a)|,\quad \mbox{ a.e. }
 x\in\mathbb{R}.
\end{align}
Similarly,
\begin{align}
g(x)
 =\lambda_{m-1}\overline{\lambda_{m}}g(x  +2 a),
 \quad \mbox{ a.e. } x\in [(m-1)a-B, (m-2)a+B],~m\in\mathbb{Z}.
    \label{eq:ec5-1}
\end{align}
and
\begin{align*}
|g(x)|= |g(x+2a)|,\quad \mbox{ a.e. } x\in\mathbb{R}.
\end{align*}

On the other hand, applying Lemma \ref{lem1} to (\ref{eq:ec3}) yields that there exists some $\lambda(t_0) \in \mathbb{T}$ such that
\begin{align}\label{equ1.15}
  f(x) = \lambda(t_0) g(x),\quad \mbox{ a.e. } x\in [t_0-B, t_0+B]
\end{align}
or
\begin{align}\label{equ1.15-1}
  f(x) = \lambda(t_0) \overline{g(-x+2t_0)},\quad \mbox{ a.e. } x\in [t_0-B, t_0+B].
\end{align}
Since $t_{0}/a$ is an irrational number, there exists a unique integer $m_0$ such that
\begin{align*}
(m_0-1)a < t_{0} < m_0a.
\end{align*}

If (\ref{equ1.15}) holds, then
setting $m=m_0$ in (\ref{equ1.5-1}) yields that
\begin{align*}
g(x)=\lambda_{m_0}\overline{f(-x+2m_0a)}
=\overline{\lambda(t_{0})}
\lambda_{m_0}\overline{g(-x+2m_0a)},\quad \mbox{ a.e. } x\in [t_{0}+B,m_0a+B].
\end{align*}
Consequently,
\[
f(x)=\lambda_{m_0}\overline{g(-x+2m_0a)}=\lambda(t_{0})g(x),\quad \mbox{ a.e. } x\in [t_{0}+B,m_0a+B].
\]
Hence, we obtain that
$f(x)=\lambda(t_{0})g(x)$, a.e. on $[m_0a-B,m_0a+B]$. It follows  from Case~(i) that
$f = \lambda(t_0) g$, a.e.

If (\ref{equ1.15-1})  holds, then
for any $x\in [t_0-B, t_0]$, we have $ -x+2t_0\in[t_0, t_0+B] \subset[m_0a-B, m_0a+B] $. Thus,
setting $m=m_0$ in (\ref{equ1.5-1}) we obtain that
\begin{align*}
    f(x-2t_0+2m_0a)=\lambda_{m_0}\overline{g(-x+2t_0)},\quad \mbox{ a.e. } x\in [t_0-B, t_0].
\end{align*}
It follows from (\ref{equ1.19}) that
\[|f(x-2t_0)|=|f(x-2t_0+2m_0a)|=|g(-x+2t_0)|,\quad \mbox{ a.e. } x\in [t_0-B, t_0].
\]
Similarly, for any $x\in [t_0, t_0+B]$, we find that $-x+2t_0\in[t_0-B, t_0] \subset[(m_0-1)a-B, (m_0-1)a+B] $. Thus,
\[|f(x-2t_0)|=|g(-x+2t_0)|,\quad \mbox{ a.e. } x\in [t_0-B, t_0+B].
\]
Together with (\ref{equ1.15-1}), this yields
\begin{align}\label{equ1.17}
|f(x)|=|g(-x+2t_0)|=|f(x-2t_0)|,\quad \mbox{ a.e. } x\in [t_0-B, t_0+B].
\end{align}
For any $l\in\mathbb Z$,
we see from (\ref{equ1.19}) and (\ref{equ1.17}) that
\[
  |f(x)|=|f(x+2la)|=|f(x+2la-2t_0)|=|f(x-2t_0)|,\, \mbox{ a.e. } x\in [t_0-2la-B, t_0-2la+B].
\]
Hence
\begin{align}\label{equ1.18}
|f(x)|=|f(x-2t_0)|,\quad \mbox{ a.e. } x\in\mathbb{R}.
\end{align}

Since $t_0/a$ is irrational, it follows from Weyl's criterion that the sequence $\{ lt_0 \bmod a:l\in\mathbb{Z}\}$ is dense in $[0,a]$.
Hence there exists a subsequence $\{l_{k}\}_{k\ge 1}$ such that
$$\lim_{k \to \infty} l_{k}t_0 \bmod a =0.$$ By (\ref{equ1.18}) we observe that
\[
|f(x)|=|f(x+2l_{k}t_0)|=
|f(x+2(l_{k}t_0 \bmod a))|,\quad \mbox{ a.e. } x\in \mathbb{R}, \ \forall k\ge 1.
\]
Applying Lemma \ref{Lm:L1} yields that
\begin{align*}
|f(x)|=c_0,\quad \mbox{ a.e. } x\in \mathbb{R}
\end{align*}
for some constant $c_0$. Since $f$ is $(2B -a)$-nonseparable, we know that $c_0\ne 0$ and
\begin{align}\label{equ1.10}
|f(x)|=|g(x)|=c_0,\quad \mbox{ a.e. }
\end{align}
Let $h(x)=f(x)/g(x)$. Then by (\ref{eq:ec5}) and (\ref{eq:ec5-1}), we deduce that
\begin{align*}
h(x)=\frac{f(x)}{g(x)}=\frac{\lambda_{m-1}\overline{\lambda_{m}}f(x  +2 a)}{\lambda_{m-1}\overline{\lambda_{m}}f(x  +2 a)}=h(x+2a),
\end{align*}
for a.e. $x\in [(m-1)a-B,(m-2)a+B]$ and $m\in\mathbb{Z}$. Hence
\begin{align} \label{eq:ec10}
h(x)=h(x+2a),\quad \mbox{ a.e. } x\in \mathbb{R}.
\end{align}
On the other hand, by (\ref{equ1.5-1}) and (\ref{equ1.10}), we observe that
\begin{align*}
\begin{cases}
h(x)=\frac{f(x)}{g(x)}=\frac{\lambda_{m}\overline{g(2ma-x)}}
{\lambda_{m}\overline{f(2ma-x)}}
=\frac{ f(2ma-x)}{ g(2ma-x)}=h(2ma-x),&\quad \mbox{ a.e. } x\in [ma-B, ma+B],\\
h(x)=\frac{f(x)}{g(x)}
=\frac{\lambda(t_0)\overline{g(2t_0-x)}}
{\lambda(t_0)\overline{f(2t_0-x)}}
=\frac{ f(2t_0-x)}{ g(2t_0-x)}=h(2t_0-x),&\quad \mbox{ a.e. } x\in [t_0-B, t_0+B].
\end{cases}
\end{align*}
Since $h$ has a period $2a$ which is no greater than $2B$,
similarly to (\ref{equ1.18}) we obtain that
\begin{align*}
\begin{cases}
h(x)=h(2ma-x),&\quad \mbox{ a.e. } x\in \mathbb{R},\\
h(x)=h(2t_0-x),&\quad \mbox{ a.e. } x\in \mathbb{R}.
\end{cases}
\end{align*}
Letting $m=0$, we have
\begin{align*}
h(x)=h(2t_0-x)=h(x-2t_0),\quad \mbox{ a.e. } x\in \mathbb{R}.
\end{align*}
Since $t_0/a$ is irrational and $|h|=1$ a.e., similar to (\ref{equ1.10}) we obtain that  $h(x)=c_1$, a.e. on  $\mathbb{R}$ for some constant $c_1$. Hence $f\sim g$.

The negative part is proved in
Examples~\ref{ex:ex3}, \ref{ex:ex1} and Remark~\ref{rmk}. This completes the proof.
\end{proof}

\begin{example}\label{ex:ex3}
If $a> B$, then the measurements
(\ref{equ1.6}) fail to determine $f$ uniquely up to a global phase.
\end{example}

Let $f,g\in L_{loc}^p$ be such that
\begin{align*}
f(x) &=
  -f^*(x),
        \quad \forall x> a-B,\\
g(x) & = \begin{cases}
        f(x), & x< -(a-B), \\
        -f^*(x),   & -(a-B)\le x\le a-B, \\
        f(x), & x> a-B.
     \end{cases}
\end{align*}
When $m\le -1$ or $m\ge 1$, we have $f=g$ on $[ma-B,ma+B]$.
Hence for any $t\in(-\infty,-a]\cup [a,\infty)$,
\begin{equation}\label{eq:ed5}
  |\mathcal V_{\phi}f(t, \omega)|
   = |\mathcal V_{\phi}g(t, \omega)|,\quad \forall \omega\in\mathbb R.
\end{equation}
When $m=0$, we have
\[
  g(x) = -f^*(x),\quad x\in [-B,B].
\]
Hence (\ref{eq:ec3}) holds for all $m\in \mathbb{Z}$ and $t_0 \in (-\infty,-a)\cup (a,\infty)\setminus \mathbb{Q}$.
However, if $f(x)$ and $-f^*(x)$
are not identical on $[-(a-B),a-B]$,
then $f\not\sim g$.

\begin{example} \label{ex:ex1}
If $f$ is $(2B-a)$-separable, then the measurements
(\ref{equ1.6}) fail to determine $f$ uniquely up to a global phase.
\end{example}

Suppose that
$f(x)=0$ on $[-B,B-a]$ and $f$ is $\varepsilon$-nonseparable on $(-\infty,-B)$
and $(B-a,\infty)$ for some $\varepsilon>0$. Let
\[
g(x) =\begin{cases}
 f(x), & x\in(-\infty,B-a), \\
 -f(x), &  x \in(B-a,\infty).
\end{cases}
\]
 Then
 $f\not\sim g$.
 On the other hand, for any $t \in (-\infty,-a]\cup [0,\infty)$,
\begin{align*}
|\mathcal{V}_{\phi} f(t,\omega)|&=\left|\int_{-B}^{B}f(x+t) \overline{\phi(x)} e^{-2 \pi i x \omega} \rmd x\right|\\
&=\left|\int_{-B}^{B}g(x+ t) \overline{\phi(x)} e^{-2 \pi i x \omega} \rmd x\right|\\
&=|\mathcal{V}_{\phi} g(t,\omega)|
\end{align*}
Hence
(\ref{eq:ec3}) holds for all $m\in \mathbb{Z}$ and $t_0 \in (-\infty,-a)\cup (0,\infty)\setminus \mathbb{Q}$.

\begin{remark}\label{rmk}
If $t_0/a$ is a rational number
or
the measurements
$
\{|\mathcal{V}_{\!\phi} f(t_0,\omega_{n})|,
|\mathcal{V}_{\!\psi} f\!(t_0,\omega_{n})|\!\!:
n \ge 1\}$
are removed,
then the conclusion of Theorem \ref{thm1a} fails.
\end{remark}

Recall that $\phi(x) = \overline{\phi(-x)}$.
Let $f(x)=(2+\sin(\pi x/a))e^{\pi i x/(6a)}$ and $g(x)=(2-\sin(\pi x/a))e^{\pi i x/(6a)}$.
 We have
\begin{align*}
|\mathcal{V}_{\phi} f(ma,\omega_{n})|&=\left|\int_{-B}^{B}f(x+ma) \overline{\phi(x)} e^{-2 \pi i x \omega_{n}} \rmd x\right|\\
&=\left|\int_{-B}^{B}\left(2+(-1)^m \sin\big(\frac{\pi x}{a}\big)\right)e^{\pi i x/(6a)} \overline{\phi(x)} e^{-2 \pi i x \omega_{n}} \rmd x\right|\\
&=\left|\int_{-B}^{B}\left(2-(-1)^m \sin\big(\frac{\pi x}{a}\big)\right)e^{-\pi i x/(6a)} \overline{\phi(-x)} e^{2 \pi i x \omega_{n}} \rmd x\right|\\
&=\left|\int_{-B}^{B}\left(2-(-1)^m \sin\big(\frac{\pi x}{a}\big)\right)e^{-\pi i x/(6a)} \phi(x)
 e^{2 \pi i x \omega_{n}} \rmd x\right|\\
&=\left|\int_{-B}^{B}\left(2-(-1)^m \sin\big(\frac{\pi x}{a}\big)\right)e^{\pi i x/(6a)} \overline{\phi(x)} e^{-2 \pi i x \omega_{n}} \rmd x\right|\\
&=\left|\int_{-B}^{B}g(x+ma) \overline{\phi(x)} e^{-2 \pi i x \omega_{n}} \rmd x\right|\\
&=|\mathcal{V}_{\phi} g(ma,\omega_{n})|.
\end{align*}
Similar arguments show that $|\mathcal{V}_{\psi} f(ma,\omega_{n})|=|\mathcal{V}_{\psi} g(ma,\omega_{n})|$.
But $f$ and $g$ do not agree up to a global phase.

On the other hand, if
we replace $t_0$ by a finite set of points $\{t_1,...,t_k:t_j/a\in\mathbb{Q},~ j=1,...,k\}$, then the conclusion of Theorem \ref{thm1a} also fails. Indeed, we can choose some $b\in\mathbb{R}$ such that $a/b\in\mathbb{Z}$ and $t_j/b\in\mathbb{Z}$. Then the sequence in (\ref{equ1.6}) is a subset of $\{|\mathcal{V}_{\phi} f(mb,\omega_{n})|$, $|\mathcal{V}_{\psi} f(mb,\omega_{n})|:n\ge 1,
m \in \mathbb{Z}\}$.
It follows from previous arguments
that (\ref{equ1.6}) fails to determine
$f$ up to a global phase.

Next we give a proof of
Theorem~\ref{thm:main}.

\begin{proof}[Proof of Theorem~\ref{thm:main}]
From the proof of Theorem \ref{thm1a}, we observe the following.

If there exists some $m_0\in\mathbb{Z}$ such that (\ref{equ1.5}) holds,   then $f = \lambda_{m_0} g$, a.e.

If (\ref{equ1.5-1}) holds for all $m\in\mathbb{Z}$, then  (\ref{equ1.19}) holds. That is,
\begin{align*}
|f(x)|=|f(x+2a)|,\quad \mbox{ a.e. } x\in\mathbb{R}.
\end{align*}
Given that $f\in L^{p}(\mathbb R)$, we deduce that $f=0$, a.e., which contradicts the non-separability of $f$.

On the other hand, by letting $f\in L^p(\mathbb R)$ in
Example~\ref{ex:ex3}, we obtain a counterexample for
the case $a>B$.
And letting $f\in L^p(\mathbb R)$ in
Example~\ref{ex:ex1} yileds a counterexample
for $(2B-a)$-separable signals.
This completes the proof.
\end{proof}

\begin{remark}
We point out that with a slight change of the measurements,
Theorem~\ref{thm1a}
also works for the more general case:
$\phi\in L^1$ and $f\in L_{loc}^1$.
\end{remark}

In fact, we see from Lemma~\ref{Lm:L0}
that for almost $t\in\mathbb R$,
$f\cdot \phi(\cdot-t-ma)$ is integrable
for all $m\in\mathbb Z$. Fix such a $t_0$.
Then the measurements
\begin{align}\label{eq:ed3}
\{|\mathcal{V}_{\phi} f(t_0+ma,\omega_{n})|,~|\mathcal{V}_{\psi} f(t_0+ma,\omega_{n})|,~|\mathcal{V}_{\phi} f(t_1,\omega_{n})|,~|\mathcal{V}_{\psi} f(t_1,\omega_{n})|:n\ge 1, m \in \mathbb{Z}\}
\end{align}
determine $f$ up to a global phase
provided $(t_1-t_0)/a$ is an irrational number.


\end{document}